\theoremstyle{plain}
\newtheorem{theorem}{Theorem}[section]
\newtheorem{prop}[theorem]{Proposition}
\newtheorem{cor}[theorem]{Corollary}
\theoremstyle{definition}
\newtheorem{defn}[theorem]{Definition}
\newtheorem{rmk}[theorem]{Remark}
\newtheorem{ex}[theorem]{Example}
\newtheorem*{ex*}{Example}
\theoremstyle{Question}
\newcommand\sA{{\mathcal A}}
\newcommand\sB{{\mathcal B}}
\newcommand\sV{{\mathcal V}}
\newcommand\sL{\mathcal{L}}
\newcommand{\sS}{\mathcal{S}}
\newcommand\qq{{\mathbb{Q}}}
\newcommand\zz{{\mathbb{Z}}}
\newcommand\rr{{\mathbb{R}}}
\newcommand\cc{{\mathbb{C}}}
\newcommand\CN{{\mathbb{C}^{n}}}
\newcommand{\rank}{{\rm rank}}
\newcommand{\ubul}{{\,\begin{picture}(-1,1)(-1,-3)\circle*{2}\end{picture}\ }}
\DeclareMathOperator{\Supp}{Supp}                
\DeclareMathOperator{\id}{id}                    
\DeclareMathOperator{\homo}{Hom}
\DeclareMathOperator{\spec}{Spec}
\DeclareMathOperator{\im}{Im}
\title{The monodromy theorem for compact K\"ahler manifolds and smooth quasi-projective varieties}
\author{Nero Budur}
\address{N. Budur: KU Leuven, Department of Mathematics,
Celestijnenlaan 200B, B-3001 Leuven, Belgium} 
\email {nero.budur@kuleuven.be}
\author{Yongqiang Liu}
\address{Y. Liu: KU Leuven, Department of Mathematics,
Celestijnenlaan 200B, B-3001 Leuven, Belgium}
\email {yongqiang.liu@kuleuven.be}
\author{Botong Wang}
\address{B. Wang: Department of Mathematics, University of Wisconsin, 480 Lincoln Drive, Madison, WI 53706, USA} 
\email {bwang274@wisc.edu}
\begin{document}

\date{}

\keywords{Monodromy theorem, K\"ahler manifold, quasi-projective variety.}
\subjclass[2010]{14F45,32S40}

\begin{abstract} 
Given any connected topological space $X$, assume that there exists an epimorphism $\phi: \pi_1(X) \to \zz$. The deck transformation group $\zz$ acts on the associated infinite cyclic cover $X^\phi$ of $X$, hence on the homology group $H_i(X^\phi, \cc)$. This action induces a linear automorphism on the torsion part of the homology group  as a module over the Laurent ring $\cc[t,t^{-1}]$, which is a finite dimensional $\cc$-vector space. We study the sizes of the Jordan blocks of this linear automorphism. When $X$ is a compact K\"ahler manifold, we show that all the Jordan blocks are of size one. When $X$ is a smooth complex quasi-projective variety, we give an upper bound on the sizes of the Jordan blocks, which is an analogue of the Monodromy Theorem for the local Milnor fibration.  
\end{abstract}
\maketitle
\section{Introduction}
\subsection{Motivation}
Let us first recall  the well-known Monodromy Theorem in singularity theory. 
Let $f: (\CN,0) \to (\cc, 0)$ be a germ of analytic function. Let $B_\epsilon$ be a small open ball at the origin in $\CN$. We call $X= B_\epsilon \setminus f^{-1}(0)$ the local complement of the germ $\{f=0\}$. Let  $D_{\delta} \subset \cc$ be a disc around the origin with $0< \delta\ll \epsilon$. Set $D_\delta^{*}= D_\delta \setminus \{0\}$. Then there exists a Milnor fibration  $f: X \cap f^{-1}(D^*_\delta)\to D_\delta^{*} $  with Milnor fibre $F_{f}$.  Let $h: F_{f} \to F_{f}$ be the monodromy homemorphism associated with going once anti-clockwise along the boundary $\partial D_{\delta}$. It induces the corresponding automorphisms: $$h_i: H_i (F_f,\cc) \to H_i (F_f,\cc).$$
Note that $F_{f}$ has the homotopy type of a finite $(n-1)$-dimensional CW complex. So the only interesting homology group $H_i (F_f,\cc)$ appears in  the range $0\leq i \leq n-1$.

\begin{theorem}[Monodromy Theorem] \label{monodromy} \cite[Theorem 3.1.20]{D1} With the above assumptions and notations, we have that 
\begin{enumerate}
\item[(1)] the eigenvalues of $h_{i}$ are all roots of unity for all $i$.
\item[(2)] the sizes of the blocks in the Jordan normal form of $h_i$ are at most $i+1$. 
\end{enumerate}
\end{theorem}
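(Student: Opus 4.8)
The plan is to deduce both statements from the existence of a limit/nearby-cycle mixed Hodge structure on the Milnor fibre cohomology, and to read the conclusions off from the interaction between the monodromy logarithm and the weight filtration. First I would pass to a convenient representative: since the Milnor fibration depends only on the germ, choose a good representative $f\colon\mathcal X\to D_\delta$ with $\mathcal X$ a small ball, and by Hironaka's resolution of singularities (in the complex analytic category) replace it by a proper modification $\tilde f\colon\tilde{\mathcal X}\to D_\delta$, an isomorphism over $D_\delta^\ast$, whose central fibre $\tilde f^{-1}(0)=\sum_j m_jD_j$ is a divisor with simple normal crossings. By the Milnor--L\^e fibration theorem together with proper base change, the reduced cohomology $\tilde H^i(F_f,\cc)$ is computed by the nearby cycles of $\tilde f$ over the origin, and $h_i$ is the corresponding geometric monodromy. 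Steenbrink's construction (equivalently, Saito's theory of mixed Hodge modules applied to $\psi_{\tilde f}$) then puts a functorial mixed Hodge structure on each $H^i(F_f,\cc)$ for which the semisimple part of $h_i$ and the logarithm $N_i$ of its unipotent part act as morphisms, with $N_i$ of type $(-1,-1)$.

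To establish part (1), I would set $m=\mathrm{lcm}_j\,m_j$ and perform the base change $s\mapsto s^m=t$, followed by normalisation and a further resolution, producing a semistable degeneration (reduced simple normal crossings central fibre). For a semistable degeneration the monodromy is unipotent: near the normal crossings divisor it is the product of the commuting Picard--Lefschetz shear transformations attached to the components met, each of which is unipotent. Hence the monodromy of the original family becomes unipotent after raising $s$ to the $m$-th power, so the eigenvalues of $h_i$ are $m$-th roots of unity. (Alternatively, part (1) follows from Grothendieck's $\ell$-adic local monodromy theorem, or from the Brieskorn--Malgrange description of the eigenvalues together with Kashiwara's rationality theorem for the roots of the Bernstein--Sato polynomial $b_f$.)

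For part (2), I would use the Jordan decomposition $h_i=h_i^{s}\cdot\exp(N_i)$, so that the largest Jordan block of $h_i$ has size $1$ plus the nilpotency index of $N_i$, and it suffices to show $N_i^{\,i+1}=0$. The crucial input is the weight bound: the mixed Hodge structure on $\tilde H^i(F_f,\cc)$ has weights in $[0,2i]$. The upper bound $2i$ is Deligne's general bound for the weights on the $i$-th cohomology of a space carrying a geometric mixed Hodge structure, and the lower bound $0$ is automatic; more concretely both can be seen from Steenbrink's weight spectral sequence, whose $E_1$-terms involve the cohomology of intersections of at most $i+1$ of the $D_j$, Tate-twisted by non-negative amounts. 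Since $N_i$ has type $(-1,-1)$ it sends $\mathrm{gr}^W_k$ into $\mathrm{gr}^W_{k-2}$; iterating, $N_i^{\,i+1}$ sends $\mathrm{gr}^W_k$ with $k\le 2i$ into $\mathrm{gr}^W_{k-2(i+1)}$ with $k-2(i+1)\le-2<0$, which vanishes. Thus $N_i^{\,i+1}=0$ on the associated graded, hence $N_i^{\,i+1}=0$, and the Jordan blocks of $h_i$ have size at most $i+1$.

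The hard part is the construction and weight bookkeeping of the limit mixed Hodge structure: setting up Steenbrink's weight spectral sequence for the normal crossings degeneration, establishing its degeneration and the fact that it computes $\tilde H^\ast(F_f,\cc)$ compatibly with the monodromy, and in particular controlling the weight range $[0,2i]$ --- the lower bound being exactly what forces the nilpotency index of $N_i$ to stop at $i+1$. A secondary technical point is the passage from the analytic germ to a setting in which resolution of singularities and the nearby-cycle formalism apply, together with the comparison between the topological Milnor fibre with its geometric monodromy and the algebraic nearby cycles, which rests on the fact that $F_f$ has the homotopy type of a finite $(n-1)$-dimensional CW complex and on the Milnor--L\^e fibration theorem.
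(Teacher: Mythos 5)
The paper does not prove Theorem \ref{monodromy}: it is quoted from Dimca's book \cite[Theorem 3.1.20]{D1} as background and motivation, so there is no internal proof to compare your argument against. What you have written is nevertheless a correct outline of the standard proof via nearby cycles and the Steenbrink limit mixed Hodge structure, and it matches the approach in the cited reference, so the content is fine. Two points deserve tightening. First, the inference ``$N_i^{i+1}=0$ on the associated graded, hence $N_i^{i+1}=0$'' does not hold as stated: an endomorphism can vanish on $\mathrm{gr}^W_\ubul$ without vanishing on the filtered module. Your weight bookkeeping actually proves the stronger, correct statement directly: since $N_i$ is a morphism of mixed Hodge structures of type $(-1,-1)$, it sends $W_k$ into $W_{k-2}$, so
$$N_i^{\,i+1}\bigl(H^i(F_f,\cc)\bigr)=N_i^{\,i+1}\bigl(W_{2i}\bigr)\subseteq W_{2i-2(i+1)}=W_{-2}=0,$$
and this is the argument you should record. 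Second, invoking ``Deligne's general bound'' for the upper weight $2i$ is imprecise: the limit mixed Hodge structure on $H^i(F_f,\cc)$ is not the mixed Hodge structure of an algebraic variety, so Deligne's theorem does not apply verbatim. The weight range $[0,2i]$ must instead be extracted from the $E_1$ page of the Steenbrink weight spectral sequence, whose terms are cohomologies of smooth projective varieties (intersections of the exceptional components) with Tate twists arranged so that the contributions to $H^i$ have weights in $[0,2i]$; equivalently, the lower bound $\geq 0$ comes from the spectral sequence and the upper bound $\leq 2i$ then follows from the symmetry $N^k\colon\mathrm{gr}^W_{i+k}\xrightarrow{\sim}\mathrm{gr}^W_{i-k}$ of the monodromy weight filtration centered at $i$. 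You do gesture at this in your final clause, so the fix is to promote that remark to the actual justification and drop the appeal to Deligne's bound.
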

Examples of Malgrange \cite{Mal} show that the bounds on the sizes of the Jordan blocks are sharp.

The monodromy contains a lot of information about the topology of the singularity. The Monodromy Theorem shows that  the local complement of the germ has very  special topology. The aim of this paper is try to prove an analogue of the Monodromy Theorem with $X$ either a compact K\"ahler manifold or a smooth quasi-projective variety.

\subsection{Infinite cyclic cover}
Since in general there does not exist a circle fibration for $X$ either a compact K\"ahler manifold or a smooth quasi-projective variety, one can not define the monodromy action on the fibres any more. The alternative is the $\zz$-infinite cyclic cover  of $X$ associated to an epimorphism $\pi_{1}(X) \to \zz$ with the corresponding deck transformation action.

Let $X$ be a connected topological space of finite homotopy type. Let $\phi: \pi_1(X)\to \zz$ be an epimorphism. Consider the infinite cyclic cover $X^{\phi}$ of $X$ defined by $ker(\phi)$. The deck transformation action on $X^\phi$ induces $\zz$ actions on $H_i(X^\phi, \cc)$. Let $R:=\cc[t, t^{-1}]$ be the one variable Laurent polynomial ring. Then the $\zz$ action on each of the above homology group is equivalent to an $R$-module structure. Since $X$ is a homotopy equivalent to a finite CW-complex, $H_i(X^\phi, \cc)$ is a finitely generated $R$-module for any $i$.

Note that $R$ is a principal ideal domain.  Denote the torsion part of $H_i(X^\phi, \cc)$ as an $R$-module by $T_i(X, \phi)$. Then $T_i(X, \phi)$ is a finite dimensional $\cc$-vector space. The deck transformation, or equivalently multiplication by $t$ as an $R$-module, acts on $T_i(X, \phi)$ as a linear automorphism. Let $$T_i(X, \phi)=\bigoplus_{\lambda\in \cc^*} T_i^\lambda(X, \phi)$$ be the decomposition according to the support, or equivalently as generalized eigenspaces. Here the support is defined as
$$ \Supp (T_i(X, \phi)):= \{ \lambda \in \cc^* \mid \exists  a \in T_i(X, \phi) \text{ and } a\neq 0 \text{ such that } (t-\lambda) \cdot a=0 \} ,$$
and it is a finite set.  

Denote by $\sS_{\lambda}(T_i(X, \phi))$  the maximal size of Jordan blocks associated to the eigenvalue $\lambda$. Equivalently, in the language of $R$-modules,  $\sS_{\lambda}(T_i(X, \phi))$ is the minimal non-negative integer $m$ such that $(t-\lambda)^m \cdot T_i^\lambda(X, \phi)=0$.
Then $\sS_{\lambda}(T_i(X, \phi))=0$ if and only if $\lambda \notin \Supp (T_i(X, \phi))$ by convention.
On the other hand, $\sS_{\lambda}(T_i(X, \phi)) = 1$ if and only if $T_i^\lambda(M, \phi)$ is a nontrivial semi-simple $R$-module.

\begin{ex}\label{example1}
Let $f: X\to S^1$ be a fiber bundle with connected fiber $F$, which is a finite CW-complex. Let $\phi=f_*: \pi_1(X)\to \pi_1(S^1)=\zz$, where the connected assumption about the fiber $F$ implies that this map is surjective. Then the infinite cyclic cover $X^\phi$ is homeomorphic to $F\times \rr$, and hence homotopy equivalent to $F$. The deck transformation action on $H_i(X^\phi, \cc)$ is isomorphic to the monodromy action on $H_i(F, \cc)$. In particular, $H_{i}(X^\phi,\cc)\cong H_{i}(F, \cc)$ is a torsion $R$-module for all $i$. It is clear that $\Supp (T_i(X, \phi))$ and $\sS_\lambda(T_i(X,\phi))$ coincide with the corresponding notions associated to the monodromy action on  $H_i(F,\cc)$. 
\end{ex}
\begin{rmk}
Notice that both the Laurent polynomial ring $R$ and the Alexander module $H_i(X^\phi, \cc)$ can be defined over $\zz$. Therefore, for any $\lambda, \lambda'\in \cc^*$ in the same $\mathrm{Gal}(\cc/\qq)$ orbit, $T_i^\lambda(X, \phi)$ is non-canonically isomorphic to $T_i^{\lambda'}(X, \phi)$ as $\mathbb{C}$-vector spaces, and their Jordan blocks have the same sizes. Moreover, if $\lambda$ is a transcendental number, then $T_i^\lambda(X, \phi)=0$.
\end{rmk}

A structure theorem has been proved for the jumping loci of $X$, when $X$ is either the complement in a small ball of  a complex analytic function germ (by Budur-Wang \cite{bw3}), called the local complement as above, a smooth complex quasi-projective variety (by Budur-Wang \cite{bw1}) or a compact K\"ahler manifold (by Wang \cite{w}). This structure theorem implies the following claim directly, which can be viewed as a generalization of the first part of the Monodromy Theorem \ref{monodromy} for all these three cases. 

\begin{prop} \label{torsion} Let $X$ be either a local complement, a smooth complex quasi-projective variety or a compact K\"ahler manifold. Then for any epimorphism $\phi: \pi_1(X)\to \zz$, the eigenvalues associated to the $t$-action on $ T_i(X, \phi)$ are roots of unity for any $i$.
\end{prop}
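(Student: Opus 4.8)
The plan is to deduce the proposition from the structure theorem for cohomology jump loci established in \cite{bw3}, \cite{bw1}, \cite{w}. First I would set up the one-parameter family of rank-one local systems attached to $\phi$. Fix $\gamma_0\in\pi_1(X)$ with $\phi(\gamma_0)=1$, and for $\lambda\in\cc^*$ let $L_\lambda$ be the rank-one local system on $X$ with monodromy representation $\gamma\mapsto\lambda^{\phi(\gamma)}$. As $\lambda$ varies, the $L_\lambda$ fill out a one-dimensional subtorus $\mathbb{T}_\phi\subset\homo(\pi_1(X),\cc^*)$, and since $\phi$ is surjective the assignment $\lambda\mapsto L_\lambda$ defines an isomorphism $\cc^*\cong\mathbb{T}_\phi$. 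Lifting a finite CW structure from $X$ to $X^\phi$ presents $C_*(X^\phi,\cc)$ as a bounded complex of finitely generated free $R$-modules with $C_*(X,L_\lambda)\cong C_*(X^\phi,\cc)\otimes_R R/(t-\lambda)$, so the universal coefficient theorem over the PID $R$ gives, for each $i$ and each $\lambda\in\cc^*$, a short exact sequence
\[
0\longrightarrow H_i(X^\phi,\cc)\otimes_R R/(t-\lambda)\longrightarrow H_i(X,L_\lambda)\longrightarrow\mathrm{Tor}^R_1\big(H_{i-1}(X^\phi,\cc),R/(t-\lambda)\big)\longrightarrow 0.
\]

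Next I would pin $\Supp(T_i(X,\phi))$ inside a single jump locus. Writing $H_i(X^\phi,\cc)=F_i\oplus T_i(X,\phi)$ with $F_i$ free of rank $r_i$, the leftmost term has $\cc$-dimension $r_i+\dim_\cc\big(T_i(X,\phi)\otimes_R R/(t-\lambda)\big)$, which equals $r_i$ whenever $\lambda$ avoids the finite set $\bigcup_j\Supp(T_j(X,\phi))$, and for such $\lambda$ the $\mathrm{Tor}$ term vanishes as well. Hence $r_i$ is both the generic and the minimal value of $\lambda\mapsto\dim_\cc H_i(X,L_\lambda)$ on $\mathbb{T}_\phi$; in particular, setting $\Sigma^i_{r_i+1}(X):=\{\rho\in\homo(\pi_1(X),\cc^*):\dim_\cc H_i(X,L_\rho)\ge r_i+1\}$, we get $\mathbb{T}_\phi\not\subset\Sigma^i_{r_i+1}(X)$. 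On the other hand, if $\lambda\in\Supp(T_i(X,\phi))$ then $T_i(X,\phi)\otimes_R R/(t-\lambda)\ne 0$, so the exact sequence forces $\dim_\cc H_i(X,L_\lambda)\ge r_i+1$, i.e.\ $L_\lambda\in\Sigma^i_{r_i+1}(X)$. Therefore the characters $L_\lambda$ with $\lambda\in\Supp(T_i(X,\phi))$ all lie in $\mathbb{T}_\phi\cap\Sigma^i_{r_i+1}(X)$.

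Then I would feed in the structure theorem: for $X$ a local complement (\cite{bw3}), a smooth quasi-projective variety (\cite{bw1}), or a compact K\"ahler manifold (\cite{w}), the jump locus $\Sigma^i_{r_i+1}(X)$ is a finite union of torsion-translated subtori $\zeta_\alpha\mathbb{T}_\alpha$. Because $\mathbb{T}_\phi$ is one-dimensional and is not contained in $\Sigma^i_{r_i+1}(X)$, it is contained in none of the $\zeta_\alpha\mathbb{T}_\alpha$, so each $\mathbb{T}_\phi\cap\zeta_\alpha\mathbb{T}_\alpha$ is a proper Zariski-closed subset of $\mathbb{T}_\phi$, hence finite; thus $\mathbb{T}_\phi\cap\Sigma^i_{r_i+1}(X)$ is a finite set. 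Finally I would observe that any point $p$ of such a finite set $\mathbb{T}_\phi\cap\zeta\mathbb{T}'$, with $\zeta$ of finite order $N$, is a torsion character: the group $\mathbb{T}_\phi\cap\mathbb{T}'$ has the same cardinality as its coset $\mathbb{T}_\phi\cap\zeta\mathbb{T}'$ and is therefore finite, while $p^N=(p\zeta^{-1})^N\in\mathbb{T}'\cap\mathbb{T}_\phi$, so $p^N$, and hence $p$, has finite order. Transporting back along $\cc^*\cong\mathbb{T}_\phi$: if $L_\lambda$ has finite order $M$, then the monodromy of $L_\lambda$ along $\gamma_0$, which is $\lambda^{\phi(\gamma_0)}=\lambda$, satisfies $\lambda^M=1$. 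So every $\lambda\in\Supp(T_i(X,\phi))$ is a root of unity, which is the assertion.

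I expect all the genuine difficulty to reside in the cited structure theorems; the surrounding argument is formal. Within the reduction, the one delicate point is the identification of the generic value of $\dim_\cc H_i(X,L_\lambda)$ along $\mathbb{T}_\phi$ with $\mathrm{rank}_R H_i(X^\phi,\cc)$, since this is precisely what forces $\mathbb{T}_\phi\not\subset\Sigma^i_{r_i+1}(X)$ and hence the finiteness used above. The mismatch between the homology jump loci used here and the cohomology jump loci of the references is immaterial, as the involution $\rho\mapsto\rho^{-1}$ of the character variety interchanges them, preserves torsion-translated subtori, and maps $\mathbb{T}_\phi$ to itself.
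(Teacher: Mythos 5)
Your argument is correct and follows essentially the same route as the paper: both locate $\Supp(T_i(X,\phi))$ inside the intersection of the one-parameter subtorus $\mathbb{T}_\phi\cong\cc^*$ with the homology jump locus at level $\rank_R H_i(X^\phi)+1$, and then invoke the structure theorem for that jump locus to conclude the intersection points are torsion. The only difference is that the paper delegates the universal-coefficient and finiteness steps to \cite[Theorem 4.2]{DN}, whereas you derive them directly from the UCT short exact sequence and spell out the elementary fact that a finite intersection of a subtorus with a torsion-translated subtorus consists of torsion characters.
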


 \begin{rmk} If $X$ is in one of the three cases as in Proposition \ref{torsion} with complex dimension $n$, then $X$ has the homotopy type of a finite $2n$-dimensional CW complex. It implies that $H_{i}(X^{\phi},\cc)=0$ for $i>2n$, and $H_{2n}(X^{\phi},\cc)$ is a free $R$-module. Hence the only interesting  $R$-modules $T_{i}(X,\phi)$ appear in the range $0\leq i\leq 2n-1$. 
  \end{rmk}

\subsection{Compact K\"ahler manifolds}
Our first main result is that, for compact K\"ahler manifolds, all the Jordan blocks in $T_i(X, \phi)$ are of size one, i.e., $T_i(X, \phi)$ is a semi-simple $R$-module. 
\begin{theorem}\label{main1}
Let $X$ be a connected compact K\"ahler manifold. Then for any epimorphism $\phi: \pi_1(X)\to \zz$,   $T_i(X, \phi)$ is a semi-simple $R$-module for any $i$.
\end{theorem}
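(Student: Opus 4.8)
The plan is to reduce the statement to a Koszul-type computation with the de Rham complex of $X$ twisted by a harmonic representative of $\phi$, and then to invoke the formality of the de Rham algebra of a compact K\"ahler manifold. Complexifying, the class $\phi\in H^1(X,\zz)$ is represented by a harmonic $1$-form $\alpha$; since $X$ is K\"ahler, $\alpha=\omega+\bar\omega$ with $\omega$ holomorphic, so $\alpha$ is both $d$- and $d^c$-closed, where $d^c=i(\bar\partial-\partial)$. By Proposition~\ref{torsion}, $\Supp(T_i(X,\phi))$ consists of roots of unity, so it suffices to fix a root of unity $\lambda$ and show that $(t-\lambda)$ annihilates $T_i^\lambda(X,\phi)$ for every $i$. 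Moreover, the homology and cohomology Alexander modules of $X^\phi$ are related by universal coefficients over the PID $R$, up to the involution $t\mapsto t^{-1}$ which preserves Jordan block sizes; hence it is enough to prove the corresponding statement for the torsion submodule of $H^i(X^\phi,\cc)$.

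First I would pass to a de Rham model over a complete local ring. Let $\mathcal L$ be the local system on $X$ of free rank-one $R$-modules with monodromy $\gamma\mapsto t^{\phi(\gamma)}$, so that $H^\bullet(X^\phi,\cc)=H^\bullet(X,\mathcal L)$ as $R$-modules, computed by a finite complex of free $R$-modules. Let $\widehat R_\lambda$ be the completion of $R$ at the maximal ideal $(t-\lambda)$; a formal parameter $v$ with $t=\lambda e^{v}$ identifies $\widehat R_\lambda$ with $\cc[[v]]$ and $(v)$ with $(t-\lambda)$. Completion at $\lambda$ is flat, and the torsion submodule of $H^i(X^\phi,\cc)\otimes_R\widehat R_\lambda$ is precisely the $(t-\lambda)$-primary part of the torsion of $H^i(X^\phi,\cc)$, i.e.\ $T_i^\lambda$ of the cohomological Alexander module. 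Under $t=\lambda e^v$, the local system $\mathcal L\otimes_R\widehat R_\lambda$ is the tensor product of the unitary rank-one local system $L_\lambda$ with monodromy $\gamma\mapsto\lambda^{\phi(\gamma)}$ and the rank-one free $\cc[[v]]$-module with flat connection $v\alpha\wedge-$. Hence, by the de Rham comparison over the coefficient ring $\widehat R_\lambda$,
\[
H^\bullet(X^\phi,\cc)\otimes_R\widehat R_\lambda\ \cong\ H^\bullet\big(A^\bullet_X(L_\lambda)\otimes_\cc\cc[[v]],\ \nabla_\lambda+v\,\alpha\wedge-\big),
\]
where $A^\bullet_X(L_\lambda)$ denotes the smooth de Rham complex of $L_\lambda$ with its flat connection $\nabla_\lambda$; here $v\alpha$ is a Maurer--Cartan element, because $\alpha$ is closed and $\alpha\wedge\alpha=0$ in odd degree.

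Next I would bring in the K\"ahler hypothesis through formality with coefficients. The $dd^c$-lemma holds for $A^\bullet_X(L_\lambda)$ — either from harmonic theory for unitary local systems, or by pulling back to the finite cyclic cover of $X$ trivializing $L_\lambda$, running the Deligne--Griffiths--Morgan--Sullivan argument for a deck-invariant K\"ahler metric, and passing to $\lambda$-isotypic components. Therefore $(A^\bullet_X(L_\lambda),\nabla_\lambda)$ is formal through a zig-zag of strict quasi-isomorphisms compatible with the $A^\bullet_X(\cc)$-module structure, passing through $(\ker d^c,\nabla_\lambda)$ and landing on $(H^\bullet(X,L_\lambda),0)$. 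Since $\alpha\in\ker d^c$, each arrow of the zig-zag sends the Maurer--Cartan element $v\alpha$ to $v$ times the image of $[\alpha]\in H^1(X,\cc)$, and twisting by such an element preserves quasi-isomorphisms: on each side the $v$-adic filtration is complete and exhaustive with associated graded the original untwisted map, so the conclusion follows by a spectral sequence comparison. This gives
\[
H^\bullet(X^\phi,\cc)\otimes_R\widehat R_\lambda\ \cong\ H^\bullet\big(H^\bullet(X,L_\lambda)\otimes_\cc\cc[[v]],\ v\,[\alpha]\cup-\big).
\]
It then remains a short computation. As $[\alpha]$ has degree one, $[\alpha]\cup[\alpha]=0$, so $B^i:=\im\big([\alpha]\cup-\colon H^{i-1}(X,L_\lambda)\to H^i(X,L_\lambda)\big)$ is contained in $Z^i:=\ker\big([\alpha]\cup-\colon H^i(X,L_\lambda)\to H^{i+1}(X,L_\lambda)\big)$. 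Since $v$ is a non-zero-divisor and $[\alpha]\cup-$ is $\cc[[v]]$-linear, the degree-$i$ cohomology of the last complex equals $(Z^i\otimes_\cc\cc[[v]])/(v\,B^i\otimes_\cc\cc[[v]])$; a $\cc$-linear splitting $Z^i=B^i\oplus A^i$ rewrites this as $(A^i\otimes_\cc\cc[[v]])\oplus(B^i\otimes_\cc\cc[[v]]/(v))$. The first summand is free, the second is the torsion submodule, and it is annihilated by $v$, hence by $(t-\lambda)$. Doing this for every root of unity $\lambda$ and returning to homology shows that $T_i(X,\phi)$ is semisimple for all $i$.

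The main obstacle, and the only step that is not bookkeeping, is the compatibility of formality with the Maurer--Cartan twist needed to pass to the second display: one has to check that the $dd^c$-formality of $A^\bullet_X(L_\lambda)$ is natural enough to carry the deformation $v\alpha\wedge-$ along the entire zig-zag. This is also the place where it matters that $\lambda$ is a root of unity — the unitary-coefficients version of the $dd^c$-lemma, equivalently the reduction to a finite cover, fails otherwise — which is why Proposition~\ref{torsion} is invoked first to dispose of all other eigenvalues.
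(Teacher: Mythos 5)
Your proposal is essentially correct, and the two main ingredients (DGMS formality and a Koszul-type computation showing the torsion of a twisted complex with zero differential is annihilated by $t-\lambda$) are the same as in the paper's proof. The difference is in how each handles the reduction to a single eigenvalue. The paper passes to the $N$-fold cyclic cover $X^N$ (still compact K\"ahler), where all eigenvalues of interest become $1$, and then applies a formality-implies-semisimplicity statement (Proposition~\ref{formal}) that only ever involves \emph{trivial} coefficients; the transfer of quasi-isomorphisms along the Maurer--Cartan twist is handled by the general Proposition~\ref{homotopyequiv} for DGAs. You instead stay on $X$, complete $R$ at $(t-\lambda)$, and factor $\sL^\phi\otimes\widehat R_\lambda$ as (unitary local system $L_\lambda$)\,$\otimes$\,(formal deformation $v\alpha$). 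This forces you to invoke a $dd^c$-lemma/formality statement \emph{with unitary local-system coefficients}, together with a module-version of the homotopy-invariance of the twist — precisely the ``main obstacle'' you flag. That is a genuine extra burden: you must check the DGMS zig-zag can be upgraded to a zig-zag of $A^\ubul_X(\cc)$-module quasi-isomorphisms carrying $v\alpha$, using that $\alpha\in\ker d^c$. The paper avoids exactly this by pushing the eigenvalue reduction into the topology (finite cover) rather than into the coefficients of the de Rham model. Both routes work; the paper's is more modular (trivial coefficients throughout), while yours is more direct on $X$ at the price of a slightly stronger formality input. Your final Koszul computation with the splitting $Z^i=B^i\oplus A^i$ is a concrete and correct alternative to the paper's graded-degree argument in Proposition~\ref{algebraformal}. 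One small imprecision: the $dd^c$-lemma with unitary coefficients holds for any $|\lambda|=1$ (not only roots of unity); it is the \emph{finite-cover} version of the argument that requires $\lambda$ to be a root of unity, which is what Proposition~\ref{torsion} supplies.
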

Proposition \ref{torsion} allows us to reduce the proof to the case when $\lambda=1$. When $\lambda=1$, using the following proposition, we reduce to the fact that the real homotopy type of a compact K\"ahler manifold is formal \cite{dgms}. 
\begin{prop}\label{formal}
Let $X$ be a smooth real manifold, which is of finite homotopy type. Suppose that $X$ is formal. Then for any epimorphism $\phi: \pi_1(X)\to \zz$, the eigenvalue $1$ part of $T_i(X, \phi)$ is a semi-simple $R$-module, i.e., $(t-1)\cdot T_i^1(X, \phi)=0$. 
\end{prop}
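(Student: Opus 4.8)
The plan is to translate the statement about the $R$-module $T_i^1(X,\phi)$ into a statement about the minimal model of the differential graded algebra (DGA) of forms on $X$, and then to exploit formality. First I would recall the interpretation of the eigenvalue-$1$ part of the Alexander module in terms of twisted cohomology: the epimorphism $\phi$ determines, for each $\lambda\in\cc^*$, a rank-one local system $L_\lambda$ on $X$ with monodromy $\lambda$ along loops paired with $\phi$, and there is a spectral sequence (or exact sequence of $R$-modules) relating $H_*(X^\phi,\cc)$ to the cohomology $H^*(X,L_\lambda)$. Concretely, the size of the largest Jordan block of the $t$-action on $T_i^1(X,\phi)$ is governed by the length of the chain of maps in the complex $(H^*(X,\cc),\,[\alpha]\cup)$, where $\alpha\in H^1(X,\cc)$ is the class represented by the closed $1$-form $\omega$ pulled back from $d\theta$ on $S^1$ via $\phi$. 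That is, $\sS_1(T_i(X,\phi))\le m$ is equivalent to a statement about the vanishing of $m$-fold Massey-type products, or more precisely about the differentials in the Aomoto complex $(H^*(X,\cc),\wedge\alpha)$ versus the actual de Rham complex deformed by $t\alpha$.

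The key step is the following: form the deformed differential $d_t = d + t\,\omega\wedge$ on the de Rham complex $A^*(X)$, or over $\cc[[t-1]]$ the family $d_s = d + s\,\omega\wedge$; the cohomology of $(A^*(X)\otimes\cc[s], d_s)$ computes (after localization) the $R$-module $H_*(X^\phi,\cc)$ near $\lambda=1$, and the order of the pole / the nilpotency index of $(t-1)$ on the torsion is exactly the highest $s$-adic order at which a cohomology class can fail to lift. On the formal model $(H^*(X,\cc), d=0)$ the deformed differential is simply $s\,[\omega]\wedge$, and a formality quasi-isomorphism $\phi_X: (M,d)\xrightarrow{\sim}(A^*(X),d)$ (through the minimal model $M$) is compatible with wedging by the closed form $\omega$ up to the controlled homotopies that formality provides. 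I would argue that formality forces all the higher Massey products with $[\omega]$ to vanish, so that the only surviving structure is the first-order deformation, which is precisely the $\cup[\omega]$ map on $H^*(X,\cc)$; this first-order term accounts only for the semisimple part, i.e.\ it can create free summands and eigenvalue-$1$ torsion on which $(t-1)$ already acts by zero, but it cannot create a Jordan block of size $\ge 2$.

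Carrying this out, the concrete steps are: (i) set up the local system / deformed-de-Rham dictionary and reduce $\sS_1(T_i(X,\phi))\le 1$ to the statement that the spectral sequence of the filtered complex $(A^*(X), d+s\omega\wedge)$, filtered by powers of $s$, degenerates at $E_2$ after passing to the formal model; (ii) use the formality quasi-isomorphism from \cite{dgms} to replace $(A^*(X),d)$ by $(H^*(X,\cc),0)$, checking that the class $[\omega]$ and the deformation by $\omega$ transport correctly (this is where one must be slightly careful: one needs a model in which $\omega$ itself, not just its class, is carried along, which is handled by choosing $\omega$ harmonic and using the $\partial\bar\partial$-lemma, or by the standard observation that a formal space is "$1$-formal with respect to any closed $1$-form"); (iii) on $(H^*,0)$ the deformation is $s\cdot([\omega]\wedge-)$, a \emph{genuine} differential with no higher corrections, so its hypercohomology over $\cc[s]$ is computed in one step and the $(t-1)$-torsion is visibly annihilated by $(t-1)$. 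The main obstacle I anticipate is step (ii): making rigorous the claim that formality of the rational (or real) homotopy type controls the \emph{one-parameter deformation} of the differential by $\omega\wedge$, i.e.\ that no Massey products $\langle [\omega],\dots,[\omega],x\rangle$ obstruct lifting; one must phrase this as the statement that $(A^*(X), d+s\omega\wedge)$ and $(H^*(X,\cc), s[\omega]\wedge)$ are quasi-isomorphic as complexes of $\cc[s]$-modules, which follows from formality once one tracks the homotopies, but requires care with degrees and convergence in the $s$-adic topology (here finiteness of the homotopy type of $X$ is used to guarantee everything is finite-dimensional and the deformation is unobstructed order by order). The remaining steps are formal bookkeeping with principal ideal domains.
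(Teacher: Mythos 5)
Your proposal is correct and follows essentially the same route as the paper: identify $T_i^1(X,\phi)$ (after $(t-1)$-adic completion) with the torsion in the cohomology of the thickened de Rham complex $\Omega^\ubul_{DR}(X,\eta,\infty)$ (the paper's Corollary \ref{deRham}), transport along the formality zigzag to the DGA $(H^\ubul(X),0)$ (Proposition \ref{homotopyequiv} together with the independence of the choice of closed $1$-form representing $\phi^*(1)$, Proposition \ref{independence}), and then observe that on a DGA with zero differential the thickened complex has differential $\wedge[\omega]\cdot s$, which is homogeneous of $s$-degree one, forcing all torsion in cohomology to be annihilated by $s$ (Proposition \ref{algebraformal}). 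The "main obstacle" you flag in step (ii) — transporting the $1$-form, not just its class, along a quasi-isomorphism that need not be surjective — is exactly what Propositions \ref{independence} and \ref{homotopyequiv} resolve, so it is not a real gap.
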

This proposition generalizes results of Papadima-Suciu \cite{ps} and Fern\'andez-Gray-Morgan \cite{fgm}.

\subsection{Smooth quasi-projective varieties}

Our second main result is a global analogue of the second part of the Monodromy Theorem \ref{monodromy}. 
\begin{theorem}\label{main2}
Let $X$ be a smooth complex quasi-projective variety with complex dimension $n$. Fix an epimorphism $\phi: \pi_1(X)\to \zz$.  Then for any $i$ and any $\lambda \in \cc^*$,
$$\sS_\lambda(T_i(X, \phi))\leq \min \{i+1, 2n-i\}.$$
\end{theorem}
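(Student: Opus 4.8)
Throughout I will use the notation and the results of the excerpt freely; in particular Proposition \ref{torsion} lets me assume that the eigenvalues occurring are roots of unity.

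\smallskip
\noindent\emph{Step 1: reduction to $\lambda=1$.} Fix $\lambda$, which by Proposition \ref{torsion} is a root of unity of some order $d$ (if $\lambda$ is not a root of unity then $T_i^\lambda(X,\phi)=0$ and there is nothing to prove). Let $p\colon X'\to X$ be the connected finite covering attached to the subgroup $\phi^{-1}(d\zz)\subseteq\pi_1(X)$. Then $X'$ is again smooth quasi-projective of the same dimension $n$, the homomorphism $\phi':=\tfrac1d(\phi\circ p_*)\colon\pi_1(X')\to\zz$ is an epimorphism, the infinite cyclic covers coincide, $(X')^{\phi'}=X^\phi$, and the deck transformation $t'$ of $(X')^{\phi'}$ equals $t^d$, where $t$ is the deck transformation of $X^\phi$. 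On the generalized $\lambda$-eigenspace write $t=\lambda(1+\nu)$ with $\nu$ nilpotent; since $\lambda^d=1$ we get $t'-1=t^d-1=\nu\cdot u$ with $u$ a unit, so $(t-\lambda)^m$ and $(t'-1)^m$ have the same kernel there. Hence $T_i^\lambda(X,\phi)\subseteq T_i^1(X',\phi')$ and $\sS_\lambda(T_i(X,\phi))\le\sS_1(T_i(X',\phi'))$. Since $X'$ has dimension $n$, it suffices to prove $\sS_1(T_i(Y,\psi))\le\min\{i+1,2n-i\}$ for an arbitrary smooth quasi-projective $Y$ of dimension $n$ and epimorphism $\psi\colon\pi_1(Y)\to\zz$.

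\smallskip
\noindent\emph{Step 2: a weighted statement implying Theorem \ref{main2}.} The heart of the matter is the following.

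\begin{claim}
The $\cc$-vector space $T_i^1(X,\phi)$ underlies a mixed Hodge structure, functorial in $X$, such that:
\begin{enumerate}
\item[(i)] $N:=t-1$ acts as a morphism of mixed Hodge structures of type $(-1,-1)$;
\item[(ii)] $\mathrm{Gr}^W_j T_i^1(X,\phi)=0$ unless $\max\{0,\,2i-2n+1\}\le j\le\min\{2i,\,2n-1\}$.
\end{enumerate}
\end{claim}

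Granting the Claim, Theorem \ref{main2} follows at once: by (i), $N^k$ maps $\mathrm{Gr}^W_j$ to $\mathrm{Gr}^W_{j-2k}$, and the weight window in (ii) has length $\min\{2i,2n-1\}-\max\{0,2i-2n+1\}=2\min\{i,2n-1-i\}$, so $N^k$ vanishes on $\mathrm{Gr}^W$ once $2k>2\min\{i,2n-1-i\}$; since morphisms of mixed Hodge structures are strict for $W$, this forces $N^{\min\{i,2n-1-i\}+1}=0$ on $T_i^1(X,\phi)$, i.e. $\sS_1(T_i(X,\phi))\le\min\{i,2n-1-i\}+1=\min\{i+1,2n-i\}$. (For $X$ projective the window in (ii) is far from optimal — the true weight is $i$ — which is why Theorem \ref{main1} gives the stronger semisimplicity there; Theorem \ref{main2} only needs the crude window.)

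\smallskip
\noindent\emph{Step 3: constructing the mixed Hodge structure, and the main obstacle.} After Step 1 we are in the case $\lambda=1$, and here the plan is to realize $\phi$ by an algebraic morphism $f\colon X\to\gg_m$ (the class of $\phi$ in $H^1(X;\zz)$ is so realized, possibly after passing to a further finite cover or after replacing $X$ by a suitable birational modification of a compactification; alternatively one routes through the generalized Albanese map of $X$ to a semi-abelian variety). Let $\mathcal{U}_k$ denote the rank-$(k+1)$ unipotent ``logarithm'' local system on $\gg_m$, fitting in $0\to\qq(k)\to\mathcal{U}_k\to\mathcal{U}_{k-1}\to0$; as a mixed Hodge module $\mathcal{U}_k$ has weights in $[-2k,0]$, and the logarithm $N$ of its unipotent monodromy is of type $(-1,-1)$. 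The $(t-1)$-adic structure of the Alexander module is computed by $f^*\mathcal{U}_\bullet$: for $k\gg0$ there is a short exact sequence
\[
0\to H^i(X^\phi;\cc)\otimes_R R/(t-1)^k\to H^i\!\big(X;\,f^*\mathcal{U}_k\big)\to\mathrm{Tor}^R_1\!\big(H^{i+1}(X^\phi;\cc),\,R/(t-1)^k\big)\to0 ,
\]
and passing to the inverse limit and splitting off the evident free contributions and the contribution of $T_{i+1}^1(X,\phi)$ exhibits $T_i^1(X,\phi)$ as a subquotient of $\varprojlim_k H^i(X;f^*\mathcal{U}_k)$; since each $H^i(X;f^*\mathcal{U}_k)$ carries a mixed Hodge structure (Saito) on which $N$ is of type $(-1,-1)$, so does $T_i^1(X,\phi)$, giving (i). For (ii): because $X$ is smooth and $f^*\mathcal{U}_k$ has weights $\le0$, the weights of $H^i(X;f^*\mathcal{U}_k)$, hence of $T_i^1(X,\phi)$, are $\le2i$; the bound $\le2n-1$ comes from Poincaré duality on the oriented $2n$-manifold $X^\phi$, which also gives (by duality) the lower bounds $\ge0$ and $\ge2i-2n+1$. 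The main obstacle is precisely this Claim: (a) constructing the mixed Hodge structure on $T_i^1(X,\phi)$ \emph{canonically and compatibly} with the $R$-module structure and with $N$ of type $(-1,-1)$ — either via Saito's mixed Hodge modules and the logarithm pro-sheaf as above, or parallel to Proposition \ref{formal} via Morgan's mixed Hodge structure on the rational homotopy type of $X$ in place of formality; and (b) the two-sided weight estimate, the subtle half being the Poincaré-duality input, since $X^\phi$ is a non-compact manifold which is not of finite type. Once the Claim is in place, the monodromy bound is formal, exactly as in Steenbrink's proof of the classical Monodromy Theorem \ref{monodromy}.
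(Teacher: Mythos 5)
Your Step 1 (reduction to $\lambda=1$ via a finite cover) is essentially the paper's own reduction using Proposition~\ref{torsion} and (\ref{finte}). The trouble starts in Step 3, and it is a genuine gap, not a detail to be filled in. Realizing $\phi$ by an algebraic morphism $f\colon X\to\mathbb{G}_m$ is \emph{exactly} the condition that $\phi^*(1)\in H^1(X,\cc)$ is of Hodge type $(1,1)$ --- i.e.\ that $\phi$ is pure of weight two in the paper's terminology. That is precisely the restricted hypothesis of Proposition~\ref{pure}, not the general Theorem~\ref{main2}. Your proposed fixes do not help: passing to a finite cover $X'\to X$ replaces $\phi^*(1)$ by a nonzero scalar multiple of its pullback (an injective morphism of mixed Hodge structures, as the paper notes in the proof of Proposition~\ref{pure}), and replacing a good compactification by another birational one leaves $X$ and hence $H^1(X)$ untouched; neither operation can kill the weight-one part of $\phi^*(1)$. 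So for a genuinely mixed $\phi$ your Step 3 has no starting point. Even in the pure weight-two case, you flag yourself that the lower weight bound in Claim~(ii) is supposed to come from Poincar\'e duality on $X^\phi$, but $X^\phi$ is a non-compact manifold that is not of finite type, and no such duality is available off the shelf; that half of the weight window remains unproved. Finally, the step where you ``split off the evident free contributions and the contribution of $T_{i+1}^1$'' from $\varprojlim_k H^i(X;f^*\mathcal{U}_k)$ is not obviously compatible with the mixed Hodge structures, so functoriality and strictness (which you then invoke) are not secured.

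The paper avoids all of this. It works with Morgan's Gysin model $(\sA^\ubul,d)$ of $X$, decomposes a representing $1$-cycle $\eta=\eta^{1,0}+\eta^{0,1}$ into its weight-one and weight-two components \emph{inside the model}, and runs an explicit spectral sequence on the dual thickened complex $\sA_\ubul(\eta,\infty)$ filtered by columns. The three differentials $\wedge\eta^{0,1}\cdot s$, $\wedge\eta^{1,0}\cdot s$, $d\otimes\mathrm{id}$ are graded of degrees $1,1,0$ respectively with respect to the $s$-grading of $\widehat R=\cc[[s]]$; one checks by hand that the $E^1$, $E^2$, $E^3$ pages are semi-simple $\widehat R$-modules and that the sequence degenerates at $E^3$. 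The bound then comes purely from the support of the $E^0$ page: $E^0_{p,q}=0$ unless $0\le p$, $0\le q$, $p+2q\le 2n$, giving at most $\min\{i+1,2n-i+1\}$ steps in the resulting filtration, improved to $\min\{i+1,2n-i\}$ by observing that $E^3_{p,q}$ is free when $p+2q=2n$. This handles mixed $\phi$ with no appeal to a mixed Hodge structure on $T_i^1(X,\phi)$ and no Poincar\'e duality for $X^\phi$. If you want to pursue a Hodge-theoretic route, you would need to build a mixed Hodge structure on $T_i^1(X,\phi)$ for \emph{arbitrary} $\phi$ --- Morgan's mixed Hodge theory on the rational homotopy type is the natural tool, and that is in spirit what the Gysin-model spectral sequence is implementing --- but that is not what your Step 3 does.
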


Similar to the proof of Theorem \ref{main1}, by Proposition \ref{torsion}, we first reduce to the case $\lambda=1$. When $\lambda=1$, we prove the theorem using Morgan's Gysin model \cite{m}. 

\begin{rmk}  Assume that $f: \CN \to \cc$ is a reduced polynomial map.  Set $X= \CN \setminus f^{-1}(0)$. Then $f$ induces a surjective map $$\phi = f_* : \pi_1(X) \to \pi_1(\cc^*)\cong \zz.$$
 Note that in this case $X$ is an affine variety, hence it has the homotopy type of $n$-dimensional CW-complex. So the only interesting $T_i(X,\phi)$ appears in the range $0\leq i\leq n-1$. Proposition \ref{torsion} and Theorem \ref{main2} give the same results for the $t$-action on $T_i(X,\phi)$ as the one in the Monodromy Theorem \ref{monodromy}.
\end{rmk}

There exists (see \cite[page 8]{A}) mixed Hodge structure  on  $H^1(X,\cc)$ with an increasing weight filtration : $$0 =W_0 (H^1(X,\qq))  \subset W_1 (H^1(X,\qq)) \subset W_2 (H^1(X,\qq))=H^1(X,\qq), $$
and a decreasing Hodge filtration 
$$H^1(X,\cc)=F^0(H^1(X,\cc))\supset F^1(H^1(X,\cc))\supset F^2(H^1(X,\cc))=0,$$
such that $H^1(X,\cc)$ is the direct sum of $W_1(H^1(X,\cc))$ and $F^1(H^1(X,\cc))\cap \overline{F^1(H^1(X,\cc))}$. The epimorphism $\phi: \pi_1(X)\to \zz$ induces a monomorphism in cohomology $$\phi^{\ast}: \cc \hookrightarrow H^{1}(X,\cc),$$ which gives a $1$-dimensional $\cc$-vector subspace $\im(\phi^\ast)$ in $H^{1}(X,\cc)$.    We say that $\phi$ is pure with weight one , if $\im(\phi^\ast) \subset W_1(H^1(X,\cc))$. We say that $\phi$ is pure with weight two, or equivalently of type (1,1), if $\im(\phi^*)\subset F^1(H^1(X,\cc))\cap \overline{F^1(H^1(X,\cc))}$.  If neither of the above is true, we say that $\phi$ has a mixed type.

\begin{prop}\label{pure}  Let $X$ be a smooth complex quasi-projective variety with  complex dimension $n$. Fix an epimorphism $\phi: \pi_1(X)\to \zz$.    If $\phi$ is pure with weight two, then for any $\lambda \in \cc^*$,
$$\sS_\lambda(T_i(X, \phi))\leq \min\{i+1, 2n-i-1\}.$$
In particular, $H_{2n-1}(X^\phi,\cc)$ is a free $R$-module.
\end{prop}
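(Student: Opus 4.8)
The plan is to reduce to the eigenvalue $\lambda=1$ and then re-run the Gysin-model argument behind Theorem \ref{main2}, extracting one extra step of vanishing from the fact that $\phi$ is of type $(1,1)$.

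\emph{Reduction to $\lambda=1$.} By Proposition \ref{torsion} every element of $\Supp(T_i(X,\phi))$ is a root of unity; fix such a $\lambda$, a primitive $d$-th root of unity. Let $p\colon X_d\to X$ be the connected $d$-fold cyclic cover determined by $\pi_1(X)\xrightarrow{\phi}\zz\twoheadrightarrow\zz/d\zz$, and let $\psi\colon\pi_1(X_d)\to\zz$ be the restriction of $\phi$, after identifying $d\zz\cong\zz$. Then $X_d^{\psi}=X^{\phi}$, the deck parameter over $X_d$ is $t^d$, and on $T_i^{\lambda}(X,\phi)$ one has $t^d-1=(t-\lambda)\cdot u$ with $u$ invertible, so $\sS_{\lambda}(T_i(X,\phi))=\sS_1(T_i(X_d,\psi))$. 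The variety $X_d$ is again smooth quasi-projective of dimension $n$; and $\psi$ is again of type $(1,1)$, because $\im(\psi^{*})=p^{*}(\im(\phi^{*}))$ and $p^{*}\colon H^1(X,\cc)\to H^1(X_d,\cc)$ is a morphism of mixed Hodge structures, hence carries $F^1\cap\overline{F^1}$ into $F^1\cap\overline{F^1}$. So we may assume $\lambda=1$.

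\emph{The two bounds for $\lambda=1$.} The inequality $\sS_1(T_i(X,\phi))\le i+1$ is exactly Theorem \ref{main2}, so it remains to prove $\sS_1(T_i(X,\phi))\le 2n-i-1$. Recall that in the proof of Theorem \ref{main2} the module $T_i^1(X,\phi)$ is read off from the $s$-torsion (with $s=t-1$) of the cohomology of the Aomoto-type complex $(A^{\bullet}\otimes_{\cc}\cc[[s]],\,d+s\,\omega\wedge(-))$ attached to Morgan's Gysin model $(A^{\bullet},d)$ of $X$ and a cocycle $\omega\in A^1$ representing $\phi^{*}$; the bound $2n-i$ (as opposed to the bound $i+1$ coming from the ordinary cohomology side) is the manifestation of Poincar\'e--Lefschetz duality $H_i(X^{\phi},\cc)\cong H^{2n-i}_c(X^{\phi},\cc)$ for the oriented open $2n$-manifold $X^{\phi}$, once one analyzes $H^{2n-i}_c(X^{\phi},\cc)$ via the compactly supported variant of Morgan's model and feeds in that the model carries a multiplicative weight filtration with $d$ of weight $0$ and ${\rm Gr}^W_m A^k=0$ outside $k\le m\le 2k$. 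The new input is that, when $\phi$ is of type $(1,1)$, the representative $\omega$ can be taken of pure weight $2$ in $A^{\bullet}$ (geometrically: $\phi$ is then induced by a rational map $\bar X\dashrightarrow\pp^1$, and $\omega$ is, up to exact terms, a $\cc$-linear combination of the Gysin classes of the components of the boundary divisor, the weight-$1$ part of $\omega$ being absent). Assigning $s$ the weight $-2$, so that $d+s\,\omega\wedge(-)$ is weight-homogeneous, the weight filtration on the complex controlling $T_i$ acquires one fewer nonzero graded piece in the relevant cohomological degree than in the mixed-type case; since in this setting that weight filtration is the monodromy weight filtration of the $t$-action, the size of a Jordan block is bounded by one plus half the length of the interval of weights occurring, and this removes one unit, yielding $\sS_1(T_i(X,\phi))\le 2n-i-1$. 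Specializing to $i=2n-1$ gives $\sS_{\lambda}(T_{2n-1}(X,\phi))=0$ for all $\lambda$, so the $R$-torsion submodule of $H_{2n-1}(X^{\phi},\cc)$ vanishes; being finitely generated over the principal ideal domain $R$, this module is therefore free.

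\emph{Main obstacle.} The delicate point is the bookkeeping in the previous paragraph: pinning down the exact cohomological degree and the exact range of weights of the part of Morgan's (or the dual, compactly supported) Gysin model that governs $T_i$, and verifying that purity of weight $2$ of $\omega$ genuinely deletes an \emph{extremal} weight class (rather than merely translating the range), while leaving the bound $i+1$ untouched. This amounts to controlling the interaction of three filtrations --- the weight filtration of the open variety $X$, the monodromy weight filtration associated with the $t$-action, and the filtration produced by Poincar\'e--Lefschetz duality on the noncompact manifold $X^{\phi}$ --- and it is the one place where the Hodge type of $\phi$ is really used; the rest is a routine adaptation of the proof of Theorem \ref{main2}.
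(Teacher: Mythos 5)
Your reduction to $\lambda=1$ is correct and matches the paper's: pass to a finite cyclic cover, which is again smooth quasi-projective, and note that because the covering map is algebraic its pullback on $H^1$ is a morphism of mixed Hodge structures, so purity of type $(1,1)$ is inherited. You also correctly identify the decisive geometric fact: if $\phi$ is of type $(1,1)$, the representing class in the Gysin model can be taken with vanishing weight-one part, i.e.\ $\eta^{1,0}=0$. That is exactly the input the paper uses.

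However, the core of the proof --- getting from $\eta^{1,0}=0$ to the sharpened bound $\sS_1\le 2n-i-1$ --- is left as a heuristic, and you explicitly flag the ``bookkeeping'' as the main obstacle, which is precisely where the proof lives. Moreover, your framing of the bound $2n-i$ via Poincar\'e--Lefschetz duality for the open manifold $X^\phi$ and a ``monodromy weight filtration of the $t$-action'' is not what drives the estimate, and there is no established limit-MHS in this setting from which such a filtration or the dictum ``Jordan block size $\le 1 + \tfrac12$(weight spread)'' would follow; making those ingredients precise would be a substantial project in its own right, not a routine adaptation. The paper's actual mechanism is much more elementary and lives entirely inside the spectral sequence already used for Theorem \ref{main2}: for the dual Gysin double complex $\sA_\ubul(\eta,\infty)$, the bound $p+2q\le 2n$ comes simply from $\dim_\cc D_J = n-|J|$ (so $H^p(D_J)=0$ for $p>2(n-|J|)$), and the filtration on $H_i$ has at most $\min\{i+1,2n-i+1\}$ steps. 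On the top diagonal $p+2q=2n$ the $E^3$ term is always free (no incoming $d^0$, $d^1$), giving $\min\{i+1,2n-i\}$. When $\eta^{1,0}=0$, the $d^1$ differential vanishes identically; on the next diagonal $p+2q=2n-1$ there is again no incoming $d^0$, so $E^1_{p,q}=E^2_{p,q}$ sits inside the free module $E^0_{p,q}$ as a graded submodule generated in degree zero, and since $d^2$ is graded of degree zero and all three terms on this diagonal are free generated in degree zero, $E^3_{p,q}$ is free as well. This removes one more potentially torsion step and gives $\min\{i+1,2n-i-1\}$. So your proposal has the right geometric insight and the right reduction, but the central estimate is not actually proved, and the route you sketch for it is not the one that works.
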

Next, we recall a slightly different monodromy theorem and compare it with Proposition \ref{pure}.
 Let  $f: Y \to S$ be an algebraic map of algebraic varieties such that $\dim_\cc Y=n$ and $\dim_\cc S = 1$.  Assume that $S$ is a smooth curve. There exists a finite set $B\subset S$ such that $f$ is a locally trivial fibration over $S\setminus B$ with generic fibre $F$. We assume that the fiber $F$ is connected.  For any $b\in B$, let $D_b$ be a small enough disc in $S$ centred at $b$.  Let $h_b: F \to F$ be the monodromy homemorphism associated with going once anti-clockwise along the boundary $\partial D_b$. Note that $H^1(\partial D_b, \cc)\cong \cc$ can be viewed as a vector space with pure Hodge structure type $(1,1)$, hence the induced epimorphism $f_* : \pi
 _1(f^{-1}(\partial D_b)) \to \pi_1(\partial D_b) \cong \zz$  is pure with weight 2.  Using the limit mixed Hodge structures on $H^i(F,\cc)$ (hence also for $H_i(F,\cc)$), one can prove that the Jordan blocks associated to the monodromy action on $H_i(F, \cc)$ are of size at most $\min\{i+1, 2n-i-1\}$, see \cite[Corollary 2.1]{D2}. This upper bound coincides with the one in Proposition \ref{pure}. 

\subsection{Application}

When the epimorphism $\phi: \pi_1(X)\to \zz$ can be realized as a bundle map as in Example \ref{example1}, the results of Theorem \ref{main1} and Theorem \ref{main2} provide restrictions for the monodromy action on the homology of the fibers. In this case, we get a refined version of \cite[Theorem 1.3]{qw}.
\begin{cor} \label{bundle}
Let $f: X\to S^1$ be a fiber bundle with connected fiber $F$, where the homology groups $H_i(F,\cc)$ are finite dimensional vector spaces for all $i$. 
\item[(a)] If $X$ is homotopy equivalent to a compact K\"ahler manifold, then 
\begin{enumerate}
\item[(i)] the eigenvalues of the monodromy action on $H_i(F, \cc)$ are roots of unity; 
\item[(ii)] the monodromy action is semi-simple;
\end{enumerate}
\item[(b)] If $X$ is homotopy equivalent to a smooth quasi-projective variety of complex dimension $n$, then 
\begin{enumerate}
\item[(i)] the eigenvalues of the monodromy action on $H_i(F, \cc)$ are roots of unity; 
\item[(ii)] the Jordan blocks of the monodromy action on $H_i(F, \cc)$ are of size at most $\min\{i+1, 2n-i\}$.
\item[(iii)] this upper bound associated to $H_i(F, \cc)$ can be improved to $\min\{i+1, 2n-i-1\}$, when the induced epimorphism $f_*: \pi_1(X) \to \pi_1(S^1)$ is pure with weight two.
\end{enumerate} 
\end{cor}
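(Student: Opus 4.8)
The plan is to deduce everything from the results already proved for $T_i(X,\phi)$, using the dictionary of Example \ref{example1} to translate between the monodromy of the bundle and the deck transformation on the infinite cyclic cover. First I would set $\phi = f_*\colon \pi_1(X)\to\pi_1(S^1)=\zz$, which is an epimorphism because $F$ is connected. Here $F$ is only assumed to have finite-dimensional homology rather than finite homotopy type, but the hypothesis that $X$ is homotopy equivalent to a compact K\"ahler manifold (for (a)) or to a smooth quasi-projective variety (for (b)) forces $X$ to have the homotopy type of a finite CW complex; hence $H_i(X^\phi,\cc)$ is a finitely generated $R$-module, and since $X^\phi\cong F\times\rr$ it is also isomorphic to $H_i(F,\cc)$, a finite-dimensional $\cc$-vector space, and therefore $R$-torsion. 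Thus $T_i(X,\phi)=H_i(X^\phi,\cc)$, the deck transformation acts on it as multiplication by $t$, and this automorphism is conjugate to the monodromy $h_i$ on $H_i(F,\cc)$; in particular $\Supp(T_i(X,\phi))$ is the set of eigenvalues of $h_i$, and $\sS_\lambda(T_i(X,\phi))$ is the maximal size of a Jordan block of $h_i$ for the eigenvalue $\lambda$.

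Next I would make the homotopy invariance explicit. Let $g\colon X'\to X$ be a homotopy equivalence with $X'$ a connected compact K\"ahler manifold (for (a)) or a smooth quasi-projective variety of complex dimension $n$ (for (b)), and put $\phi'=\phi\circ g_*\colon\pi_1(X')\to\zz$, which is an epimorphism since $g_*$ is an isomorphism. Then $g$ lifts to a $\zz$-equivariant homotopy equivalence $X'^{\phi'}\to X^{\phi}$, which induces $R$-module isomorphisms $T_i(X',\phi')\cong T_i(X,\phi)$ for all $i$. Consequently $\sS_\lambda(T_i(X,\phi))=\sS_\lambda(T_i(X',\phi'))$ and $\Supp(T_i(X,\phi))=\Supp(T_i(X',\phi'))$ for all $i$ and all $\lambda\in\cc^*$.

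Now the corollary follows by quoting the main results for $(X',\phi')$. For the root-of-unity statements (a)(i) and (b)(i), apply Proposition \ref{torsion}. For (a)(ii), apply Theorem \ref{main1}: $T_i(X',\phi')$ is a semi-simple $R$-module, i.e. every Jordan block of $h_i$ has size one. For (b)(ii), apply Theorem \ref{main2} to get $\sS_\lambda(T_i(X',\phi'))\leq\min\{i+1,2n-i\}$. For (b)(iii), note that the mixed Hodge structure defining purity lives on $H^1(X',\cc)$, and under $g^*$ the class $(\phi')^*\in H^1(X',\cc)$ corresponds to $f^*\in H^1(X,\cc)$; so the assumption that $f_*$ is pure with weight two means precisely that $\phi'$ is pure with weight two, and Proposition \ref{pure} gives $\sS_\lambda(T_i(X',\phi'))\leq\min\{i+1,2n-i-1\}$. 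Translating back through the identifications of the first two paragraphs yields all the stated bounds on $h_i$.

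The step I expect to require the most care is purely a matter of bookkeeping rather than depth: checking that the identifications are all legitimate — $H_i(F,\cc)\cong H_i(X^\phi,\cc)$ as $R$-modules; torsion-ness of this module when $F$ is merely assumed to have finite Betti numbers; the $\zz$-equivariant lift of the homotopy equivalence $g$; and the compatibility of "pure with weight two" with $g^*$ — so that the Jordan-block data genuinely transports. Once this is in place, the statement is an immediate consequence of Proposition \ref{torsion}, Theorem \ref{main1}, Theorem \ref{main2}, and Proposition \ref{pure}.
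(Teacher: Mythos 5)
Your proof is correct, and it follows the same route the paper implicitly takes: interpret the monodromy on $H_i(F,\cc)$ as the deck action on $H_i(X^\phi,\cc)$ via Example \ref{example1}, transport the $R$-module data through the homotopy equivalence to a compact K\"ahler (resp.\ smooth quasi-projective) model $X'$, and invoke Proposition \ref{torsion}, Theorem \ref{main1}, Theorem \ref{main2}, and Proposition \ref{pure}. You also correctly fill in two points the paper leaves implicit: that under the weaker hypothesis (only finite-dimensional $H_i(F,\cc)$, not a finite CW fiber) the module $H_i(X^\phi,\cc)$ is still finitely generated and torsion over $R$, and that ``pure with weight two'' for $f_*$ is meaningful only through the mixed Hodge structure on the quasi-projective model $X'$ transported by the homotopy equivalence.
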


\begin{rmk}  Let $f:X \to \cc^*$ be a proper smooth map, where $X$ is a smooth quasi projective complex variety.  Deligne's decomposition theorem (\cite[Theorem 1.2.1]{dem}) shows that the induced monodromy action on the homology groups of the fiber $H_*(F,\cc)$ is always semi-simple. It is interesting to compare this claim with Corollary \ref{bundle}.  
\end{rmk}


\medskip

\noindent{\it Convention:} 
Unless otherwise specified, all homology and cohomology groups will be assumed to have $\cc$-coefficients.

\medskip

\textbf{Acknowledgments.} 
We thank Lizhen Qin for many helpful discussions. N. Budur and Y. Liu were partially supported by  a FWO grant, a KU Leuven OT grant, and a Flemish Methusalem grant.

\section{Preliminaries}

\subsection{Alexander modules}
Let $X$ be a connected topological space of finite homotopy type. Let $\phi: \pi_1(X)\to \zz$ be an epimorphism. Consider the infinite cyclic cover $X^{\phi}$ of $X$ defined by $ker(\phi)$. The deck transformation action on $X^\phi$ induces $\zz$ actions on $H_i(X^\phi, \cc)$.  Then, under the deck group action, every homology group $H_{i}(X^{\phi},\cc)$ becomes a $R$-module. 

\begin{defn}  The $R$-module $ H_{i}(X^{\phi},\cc)$  is called the \textit{$i$-th Alexander module} of $X$ associated to the epimorphism $\phi: \pi_1(X)\to \zz$. 
\end{defn}

Consider the local system $\sL^\phi $ on $X$ with stalk $R$, and representation of the fundamental group defined by the composition: 
$$ \pi_{1}(X) \overset{\phi}{\rightarrow} \zz   \rightarrow   Aut(R),$$
with the second map being given by  $1_{\zz}\mapsto t$. 
Here $t$ is the automorphism of $R$ given by multiplication by $t$.

By choosing fixed lifts of the cells of $X$ to $X^{\phi}$, we obtain a free basis for the chain complex  $C_{\ubul}(X^{\phi},\cc)$ as $R$-modules. The homology of $\sL^\phi$ can be computed by the singular chain complex  $C_\ubul(X^{\phi},\cc)$ as  $R$-modules.
 In fact, we have the following $R$-module isomorphisms:
\begin{equation}\label{max1} 
 H_{i}(X,\sL^\phi)\cong H_{i}(X^{\phi})\cong H_i\left(C_\ubul(X^{\phi},\cc) \right) \text{   for all   } i.
\end{equation}

Let $\overline{\sL^\phi}= \mathit{Hom}(\sL, R_X)$ be the dual complex of $\sL$, which is the local system obtained from $\sL^\phi$ by composing all $R$-module structures with the involution $t \mapsto t^{-1}$. Here $R_X$ is the constant sheaf on $X$ with stalk $R$. Consider the dual complex $\homo _R(C_\ubul(X^\phi, \cc), R)$. Then it follows that 
$$ H^i(X, \overline{\sL^\phi}) \cong H^i \left(\homo _R(C_\ubul(X^\phi, \cc), R)\right) $$
 Using the Universal Coefficient Theorem (e.g., see \cite[Theorem 1.4.5]{D1}), we obtain:  
 \begin{equation}\label{uct}
 H_{i}(X^\phi) \cong H_i(X,\sL^\phi)   \cong {\rm Free} \left( H^{i}(X, \overline{ \sL^\phi}) \right)\oplus {\rm Tor} \left( H^{i+1}(X,\overline{\sL^\phi})\right),
 \end{equation}
 where $\mathrm{Free}$ (resp. $\mathrm{Tor}$) means taking the free (resp. torsion) part as $R$-modules.

Note that \begin{equation} \label{max2} 
H^{i}(X,\overline{\sL^\phi})\cong \overline{H^i(X,\sL^\phi)},
\end{equation}
where both overlines denotes the the new $R$-module structure by composing with the involution $t \rightarrow t^{-1}$.

\subsection{The Structure Theorem}

First, we recall the definition of the jumping loci. Let $X$ be a connected finite CW-complex with $\pi_{1}(X)=G$. Then the group of $\cc$-valued  characters, $ \homo(G,\cc^{\ast})$, is a commutative affine algebraic group. Each character $\rho \in \homo(G,\cc^{\ast})$ defines a rank one local system on $X$, denoted by $L_{\rho}$.
The homology jumping loci of $X$ are defined as follows:
$$\sV_i^k(X)=\lbrace \rho\in \homo(G,\cc^{\ast}) \mid \dim_{\cc} H_{i}(X, L_{\rho})\geq k \rbrace.$$ 

\begin{theorem}[Structure Theorem, \cite{bw1,bw3,w}]  Let $X$ be either a local complex complement, a smooth complex quasi-projective variety or a compact K\"ahler manifold. Then each $\sV^{i}_{k}(X)$ is a finite union of torsion translated sbutori of $\homo(G,\cc^*)$.
\end{theorem}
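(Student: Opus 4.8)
The plan is to deduce the statement from two ingredients: first, that each jump locus $\sV^{i}_{k}(X)$ is a finite union of translated subtori of a connected component of the character group $\homo(G,\cc^{\ast})$ (with no torsion claim yet); and second, that these translation characters can be taken to be torsion.

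For the first ingredient I would run the Hodge-theoretic analysis of rank one local systems. In the compact K\"ahler case one uses the factorization philosophy of Beauville, Green--Lazarsfeld and Simpson: every positive-dimensional component of $\sV^{i}_{k}(X)$ is pulled back along a surjective holomorphic map $f\colon X\to C$ onto an (orbifold) curve of negative Euler characteristic or onto a lower-dimensional compact K\"ahler manifold, after which an induction on $\dim X$ combined with a Leray/fibration computation identifies each component with a translated subtorus inside $f^{\ast}\homo(\pi_1(C),\cc^{\ast})$. In the quasi-projective case I would fix a good compactification $\bar X$ with simple normal crossing boundary $D=\bar X\setminus X$ and analyze the twisted cohomology $H^{\ast}(X,L_\rho)$ using Deligne's canonical extensions (equivalently, tame harmonic bundles in the sense of Simpson and Mochizuki): this produces mixed Hodge structures varying algebraically with $\rho$, from which semicontinuity and the ``pencil'' structure of the jump loci follow. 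This is essentially Arapura's argument carried out in all cohomological degrees. The local-complement case reduces to the quasi-projective one after compactifying, or is treated directly as in Budur--Wang.

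For the second ingredient, the torsion property, I would show that $\sV^{i}_{k}(X)$ is stable under the natural action of $\mathrm{Aut}(\cc)$ on $\homo(G,\cc^{\ast})$. This ``absoluteness'' holds because $G$ and the universal twisted chain complex are defined over $\zz$, together with a comparison between the Betti jump loci and the de Rham/Dolbeault jump loci (which descend to a number field) via the Riemann--Hilbert correspondence and Simpson's theorem identifying the jump loci for local systems, flat connections and Higgs bundles on a quasi-projective variety. Granting absoluteness, I would invoke the elementary lemma that a translated subtorus $\rho\cdot T\subseteq(\cc^{\ast})^{N}$ stable under $\mathrm{Gal}(\overline{\qq}/\qq)$ must have $\rho$ torsion — the Galois orbit of the class of $\rho$ in the quotient torus is finite, while Galois acts on all roots of unity through $\widehat{\zz}^{\times}$ — and then move $\rho$ within its $T$-coset to a genuine torsion character.

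The main obstacle is the first ingredient in the quasi-projective setting: since $L_\rho$ does not extend to a local system across the boundary divisor $D$, one cannot argue purely topologically and must import the analytic theory of tame harmonic bundles and logarithmic connections with prescribed residues, and then verify that the associated mixed Hodge structures, and hence the jump conditions, vary algebraically over the whole character torus. The torsion step is the other delicate point, since absoluteness of the Betti jump loci is itself a substantial theorem rather than a formality.
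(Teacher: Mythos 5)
The paper does not prove this Structure Theorem; it quotes it as a black box from \cite{bw1,bw3,w} and only uses it downstream (Proposition \ref{torsion}). So the comparison here is between your sketch and the proofs in those references, not an internal argument. Your sketch captures the right circle of ideas --- Hodge theory of rank-one local systems on one side, arithmetic/Galois considerations to get torsion translates on the other --- but it diverges from what those references actually do, and at least one step would not go through as written.

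For the subtorus structure, the factorization through maps to curves (Beauville, Green--Lazarsfeld, Simpson, Arapura) is fundamentally a statement about degree-one characteristic varieties; for higher $i$ and all $k$ it is better regarded as a \emph{consequence} of the structure theorem rather than a route to it. The actual arguments in \cite{bw1} and \cite{w} are local-to-global via differential graded Lie algebras: the formal germ of $\sV^i_k(X)$ at a character $\rho$ is identified with a cohomology jump locus of a DGLA controlling deformations of $L_\rho$, and Hodge theory (Simpson's nonabelian Hodge theory in the compact case, Mochizuki's tame harmonic bundles and mixed twistor modules in the quasi-projective case) gives that DGLA a weight grading/formality from which one deduces that the germ is a finite union of linear subspaces; globalizing then yields translated subtori. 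Your phrase ``essentially Arapura's argument carried out in all cohomological degrees'' glosses over the real difficulty: Arapura's theorem has hypotheses (on $H^1(X,\mathcal{O}_X)$, and on the degree range) that are exactly what \cite{bw1} removes by bringing in Mochizuki's theory, so one cannot simply ``carry out the argument in all degrees.''

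For the torsion translate, your absoluteness-plus-$\mathrm{Gal}(\overline{\qq}/\qq)$ argument is the right idea in the projective and quasi-projective cases and is essentially Simpson's original strategy. But it cannot be applied verbatim to a general compact K\"ahler manifold: such an $X$ need not be algebraic, so it is not defined over a number field and the Betti/de~Rham/Dolbeault comparisons needed for absoluteness are unavailable. Handling exactly this obstruction is the content of \cite{w}, where the torsion conclusion is obtained by a different argument rather than by Galois descent. So for the K\"ahler third of the theorem your second ingredient has a genuine gap, and in all three cases the technical route in the cited papers is the DGLA/Hodge-theoretic local analysis rather than curve factorization.
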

As a direct application of the Structure Theorem, we prove Proposition \ref{torsion}.
\begin{proof}[Proof of Proposition \ref{torsion}]
Note that  $\zz$ and $\cc^*$ (as multiplicative group) are both abelian groups. So the map $\phi$ and any map in $\homo(G, \cc^*)$ factor through $H_1(X,\zz)$. Consider the following commutative diagram:
$$
\xymatrix{
G=\pi_1(X) \ar[r]^{ab} \ar[rd]^{\phi}   & H_1(X,\zz)  \ar[d]^{\phi_{ab}} \ar[r]  & \cc^* \\
             &     \zz    \ar[r]    & \cc^*  
}$$
where the map $ab$ denotes the abelization map from the fundamental group to its homology group, and $\phi_{ab}$ is defined such that $\phi =\phi_{ab} \cdot ab$. Note that $\phi_{ab}$ is surjective, hence it induces a monomorphism $$\phi^\#: \homo(\zz, \cc^\ast) \to \homo(G,\cc^*).$$ Denote $\mathbb{T}^\phi := \im(\phi^\#) $, which is isomorphic to $\cc^*$.

 The eigenvalues for $T_{i}(X,\phi)$ can be observed from $\sV_{i}^{k}(X)\cap \mathbb{T}^\phi$, if we choose the right number $k$. According to \cite[Theorem 4.2]{DN}, $k$ can be chosen as $$k:= \rank   H_{i}(X^{\phi})  +1,$$
 where the rank is for the free part of  $H_{i}(X^{\phi})$ as $R$-modules.  For such $k$,   the intersection $\sV_{i}^{k}(X)\cap \mathbb{T}^\phi$ consists of at most finitely many points.  The structure theorem implies that these finite points are all torsion points. Moreover, \cite[Theorem 4.2]{DN} shows that  $\Supp(T_{i}(X,\phi))$ is included in $(\phi^\#)^{-1}\left( \sV_{i}^{k}(X)\cap \mathbb{T}^\phi\right)$, hence the eigenvalues are roots of unity.  
\end{proof}


\begin{rmk}
Note that over a PID, a bounded complex of finitely generated modules is determined by its homology modules, up to quasi-isomorphism. Moreover, over a PID, a finitely generated module is determined by its fitting ideals. Therefore, over the one variable Laurent polynomial ring $R$, the homology jump ideals determine all Alexander modules, and vice versa. Next, we want to make this correspondence more explicit. 

By choosing fixed lifts of the cells of $X$ to $X^{\phi}$, we obtain a free basis for the chain complex  $C_{\ubul}(X^{\phi},\cc)$ as $R$-modules.
  We define the notion of homology jump ideal of $X$ associated to the epimorphism $\phi:\pi_1(X) \to \zz$ as follows: 
$$  J_i^k (X,\phi) := I_{rank (C_i(X^\phi))-k+1}(\partial_{i+1} \oplus \partial_i)  ， 
$$
where $I$ denotes the determinantal ideal,  $\partial_{i+1}: C_{i+1}(X^\phi) \to C_i(X^\phi) $ and $ \partial_i:  C_{i}(X^\phi) \to C_{i-1}(X^\phi) $ are the boundary maps of the chain complex.  The ideals $ J_i^k (X,\phi)$ do not depend on the choice of $C_{\ubul}(X^\phi)$. The homology jump loci ideals $ J_i^k (X,\phi)$ define Zariski-closed sub-scheme of $\spec(R)$.

In general,  let $E_\ubul$ be a bounded chain complex of $R$-modules whose homology modules are finitely generated. By a result of Mumford (see \cite[III.12.3]{Ha}), there exists a bounded  complex $F_\ubul$ of finitely generated free $R$-modules, which is a quasi-isomorphic to $E_\ubul$.    Then $J_i^k (E_\ubul)$ is defined to be $J_i^k (F_\ubul)$, and it does not depend on the choice of $F_\ubul$.

For every finitely generated $R$-module $A$, $A$ has a  decomposition, $$A \cong R^{r} \oplus (\oplus_{\lambda }  A_{\lambda}),$$ where $r$ is the rank of $A$, the second sum is over the finite set $\Supp (T(A)) \subset\cc^*$, and $A_{\lambda}$ denotes the $(t-\lambda)$ torsion part of $A$. Here $T(A)$ denotes the torsion part of $A$.

For each $A_{\lambda}\neq 0$, there exists a unique decomposition $$A_{\lambda} \cong \oplus_{j\geq 1} R/((t-\lambda)^{h_{j}(\lambda)} ),$$
where $h_1(\lambda)\geq 1,$ and $h_{1}(\lambda) \geq h_2(\lambda) \geq \cdots \geq  h_j(\lambda) \geq  \cdots  $. In fact, $h_1(\lambda)= \sS_\lambda(T(A))$.  Note that  $h_j(\lambda)=0$ for $j$ large enough.   If $A_\lambda=0$, we set $h_j(\lambda)=0$ for any $j\geq 1$.

The module $A$ can be viewed as a chain complex with only one non-zero term $A$ at degree 0. Then $ J_0^k (A)$ is $0$ or $ \prod_{\lambda \in \cc^*} \prod_{j\geq k-r} (t-\lambda)^{h_j(\lambda)}, $ depending on whether $1\leq k \leq r$ or $ r+1\leq k $.


Now we can read $J_i^k(X,\phi)$ from $H_i(X^\phi)$ and $H_{i-1}(X^\phi)$. Let $r_i$ denote the rank of $H_i(X^\phi)$, and $h_j(i,\lambda)$ denote the corresponding number for $T_i^\lambda(X,\phi)$. Here the non-zero part of $\{ h_j(\lambda)\}$ is exactly the list of all the sizes of Jordan blocks for $T_i^\lambda(X,\phi)$. Put the two sequences $\{h_j(i,\lambda)\}$ and $\{ h_j(i-1,\lambda)\}$ together and reorder them. We get a new sequence $\{ h_j^\prime(i,i-1,\lambda)\}$ in the descending order.  Then \begin{center}
 $ J_i^k (X,\phi)=\left\{ \begin{array}{ll}
0, & 1\leq k \leq r_i, \\
 \prod_{\lambda \in \cc^*} \prod_{j\geq k-r_i} (t-\lambda)^{h_j^\prime(i,i-1,\lambda)}, &  r_i+1\leq k.\\
\end{array}\right.  $
\end{center}  
More precisely, knowing $J_i^k(X,\phi)$ for all $k$ is equivalent to knowing the ranks $r_i$ and $T_i(X,\phi) \oplus T_{i-1}(X,\phi)$.

On the other hand, it is clear that if one knows $J_i^k(X,\phi)$  for all $i\leq m$, then one can recover $H_\ubul(X,\phi)$ from these data up to degree $m$.
\end{rmk}
\begin{rmk} One can consider the universal abelian cover $X^{ab}$ of $X$ associated to the Hurewicz map $ab: \pi_1(X)\to H_1(X,\zz)$. Then $C_\ubul(X^{ab},\cc)$ becomes a complex of free $\cc[H_1(X,\zz)]$-modules.  The homology jump ideal $J_i^k(X)$ can be defined similarly. 
Then $\sV_i^k(X)$ is the closed subscheme defined by the ideal $J_i^k(X)$. 
 
On the other hand, $J_i^k(X,\phi)$ is same as the image of  $ J_i^k(X) $ under the surjective ring homomorphism  $\cc[H_1(X,\zz)] \to \cc[\zz]$, which is induced by the epimorphism $\phi_{ab}: H_1(X,\zz) \to \zz$ defined in the proof of Proposition \ref{torsion}.
\end{rmk}

\subsection{Finite cover}
 Assume that $X$ is either a compact K\"ahler manifold or a smooth complex quasi-projective variety both with complex dimension $n$. Proposition \ref{torsion} shows that the eigenvalues  for  $T_i(X,\phi)$ are roots of unity for all $i$. Choose a positive integer $N$ such that $\lambda ^N=1$ for any $\lambda \in \bigcup_{i=0}^{2n} \Supp(T_i(X,\phi))$.

Fix an epimorphism $\phi: \pi_1(X) \to \zz $ as before.
Consider the composed surjective map $\pi_1(X) \to \zz \to \zz_N$, where $\zz_N$ is the finite cyclic group with $N$-elements. Denote $X^N$ the corresponding $N$-fold cover of $X$. 

Consider the following diagram, where $\phi_N$ is constructed such that the diagram is commutative.
\begin{equation} \label{diagram}
\xymatrix{
   & & 0 \ar[d] & 0  \ar[d] \\
0 \ar[r] & \pi_1(X^\phi) \ar[d]^\simeq  \ar[r]& \pi_1(X^N) \ar[d] \ar[r]^{\phi_N} &  N\zz \ar[d] \ar[r] & 0\\
0 \ar[r] & \pi_1(X^\phi) \ar[r] &\pi_1(X)  \ar[d]  \ar[r]^{\phi}   & \zz  \ar[d] \ar[r] & 0\\
    &      & \zz_N \ar[r]^\simeq   \ar[d] &     \zz _N  \ar[d] \\
      & & 0  & 0   
}
\end{equation}
In particular, all the horizontal and vertical complexes are short exact sequences. 

Note that $\phi_N: \pi_1(X^N) \to N\zz$ is also an epimorphism. $X^\phi$ can be also viewed as the $N\zz$-infinite cyclic cover of $X^N$ as induced by $\phi_N$, with the deck transformation group $N \zz$. 
Then $H_i(X^\phi)$ becomes a $R_N=\cc[t^N, t^{-N}]$-module, induced by the natural sub-algebra structure $R_N \subset R$. It is indeed the $R_N$-module $H_i(X^\phi) \otimes_{R} R_N$, where $H_i(X^\phi)$ is taken as a $R$-module induced by $\phi$.

   $R_N$ is also a principal ideal domain. The rank of the free part of $H_i(X^\phi)\otimes_{R} R_N$ does not change, and so is the dimension of the torsion part (as a $\cc$-vector space). But now the torsion part has only eigenvalue $1$, if the torsion part is non-zero. Moreover, 
\begin{equation} \label{finte}
\sS_1 \left(\mathrm{Tor}(H_i(X^\phi)\otimes_{R} R_N)\right) = \max_{\lambda \in \Supp(T_i(X,\phi))} \sS_\lambda (T_i(X,\phi)).
\end{equation}

Note that the finite cover $X_N$ of a compact K\"ahler manifold (resp. a smooth complex quasi-projective complex  variety) is still compact K\"ahler (resp. a smooth quasi-projective variety). So  (\ref{finte}) allows us to reduce the study of the maximal size of the Jordan blocks to the case when $\lambda=1$.

\section{Alexander complex and real homotopy type}
In this section, we assume that $X$ is a real manifold. Let $\phi: \pi_1(X)\to \zz$ be an epimorphism. Then the completion of $H^i(X, \sL^\phi)$ as an $R$-module at the maximal ideal $(t-1)$ is determined by the real homotopy type (same as the complex homotopy type) of $X$. In particular, the eigenvalue $1$ part of $T_i(X, \phi)$ is determined by the real homotopy type of $X$ and the $\cc$-linear map $\phi^*: \cc \to H^1(X, \cc) $. In this section, we will make the above two sentences precise and explicit. 

All the results in this section are either folklore or implicit in \cite{dp} and \cite{bw2}. 

The epimorphism $\phi: \pi_1(X)\to \zz$ induces a monomorphism in cohomology $$\phi^{\ast}: \cc \hookrightarrow H^{1}(X,\cc),$$ Let $\eta^\phi$ be a complex valued closed $1$-form, which represents the cohomology class $\phi^*(1_\cc)$.  $\phi$ is omitted for $\eta^\phi$, if the context is clear. We can construct the following thickened complex valued de Rham complex of $X$ along the direction $\eta$,
$$\Omega^\ubul_{DR}(X, \eta, m):=\cdots\to\Omega_{DR}^i(X)\otimes_\cc R/(s^m)\xrightarrow{d\otimes \id+\wedge \eta\cdot s}\Omega_{DR}^{i+1}(X)\otimes_\cc R/(s^m)\to\cdots$$
where $d$ is the differential of the de Rham complex $\Omega_{DR}^\ubul(X)$, and $s=t-1$ is a change of variable. To be more precise, the differential in $\Omega^\ubul_{DR}(X, \eta, m)$ is $\cc$-linear and is defined by
$$\omega\otimes 1\mapsto  d \omega \otimes 1 + (\omega\wedge\eta)\otimes s.$$

First of all, the cohomology of $\Omega^\ubul_{DR}(X, \eta, m)$ does not depend on the closed 1-form $\eta$ we chose to represent $\phi$. 
\begin{prop}\label{independence}
Suppose two closed 1-forms $\eta_1, \eta_2$ represent the same class in $H^1(X, \cc)$. Then $$H^i\left(\Omega_{DR}^\ubul(X, \eta_1, \infty)\right)\cong H^i\left(\Omega_{DR}^\ubul(X, \eta_2, \infty)\right).$$
\end{prop}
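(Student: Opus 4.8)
The plan is to show that if $\eta_1-\eta_2=dg$ for some smooth complex-valued function $g$ on $X$, then multiplication by the exponential-type unit $\exp(-gs)$ intertwines the two differentials, giving an isomorphism of complexes $\Omega_{DR}^\ubul(X,\eta_1,\infty)\cong\Omega_{DR}^\ubul(X,\eta_2,\infty)$. Concretely, working over the power series ring $\widehat R=\cc[[s]]$ (which is what $R/(s^\infty)$, i.e. the $(t-1)$-adic completion, amounts to here), I would define for each $i$ the $\cc$-linear map
$$
\Phi\colon \Omega_{DR}^i(X)\widehat\otimes_\cc \widehat R \longrightarrow \Omega_{DR}^i(X)\widehat\otimes_\cc \widehat R,\qquad \omega\otimes f(s)\longmapsto \omega\otimes e^{-gs}f(s),
$$
where $e^{-gs}=\sum_{k\ge 0}(-g)^k s^k/k!\in \Omega_{DR}^0(X)\widehat\otimes\widehat R$. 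This is well defined: the coefficient of each fixed power $s^k$ is a finite $\cc$-linear combination of smooth forms, so no convergence issues arise, and $\Phi$ is invertible with inverse given by multiplication by $e^{gs}$.

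The key computation is that $\Phi$ is a chain map from $(\Omega^\ubul\widehat\otimes\widehat R, d\otimes\id+\wedge\eta_1\cdot s)$ to $(\Omega^\ubul\widehat\otimes\widehat R, d\otimes\id+\wedge\eta_2\cdot s)$. Writing $d_j$ for the differential built from $\eta_j$, one checks on a generator $\omega\otimes 1$:
$$
d_2\bigl(\Phi(\omega\otimes 1)\bigr)=d_2(e^{-gs}\,\omega)= e^{-gs}\,d\omega + (-dg\, s)\wedge e^{-gs}\omega + (e^{-gs}\omega)\wedge\eta_2\,s,
$$
using the graded Leibniz rule and $d(e^{-gs})=-dg\,s\,e^{-gs}$; since $\eta_1=\eta_2+dg$, the last two terms combine to $e^{-gs}\omega\wedge\eta_1\, s$ (up to the sign coming from moving the $0$-form $e^{-gs}$ past $\omega$, which is trivial), so the total equals $e^{-gs}(d\omega+\omega\wedge\eta_1 s)=\Phi(d_1(\omega\otimes 1))$. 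Hence $\Phi$ commutes with the differentials and, being an isomorphism in each degree, induces isomorphisms on cohomology. Since the statement as written uses $\infty$ (equivalently, the $(t-1)$-adic completion), this is exactly what is needed; the truncated versions $m<\infty$ would require a quasi-isomorphism argument via the inverse limit, but that is not what is asserted here.

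The main obstacle, such as it is, is bookkeeping: one must (i) justify working with the completed tensor product $\Omega_{DR}^i(X)\widehat\otimes_\cc\widehat R$ so that $e^{-gs}$ is a legitimate element and multiplication by it is a well-defined $\cc$-linear (indeed $\widehat R$-linear) automorphism, and (ii) carry the signs correctly through the graded Leibniz rule when the $0$-form $e^{-gs}$ meets forms of positive degree (these signs are in fact all trivial because $e^{-gs}$ has even degree $0$, but this should be stated). A secondary point worth a sentence is that any two closed $1$-forms representing the same de Rham class differ by $dg$ with $g$ smooth — this is just the definition of de Rham cohomology — and that the construction, and hence the resulting isomorphism on $H^i$, is the content we want; naturality of $\Phi$ in $g$ (needed if one later wants the isomorphism to be canonical) can be noted but is not required for the bare statement. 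I expect the whole argument to be short once the completion is set up.
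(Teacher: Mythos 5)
Your construction is essentially the same one the paper invokes: the paper cites the proof of \cite[Corollary 3.13]{bw2}, which produces, for each finite $m$, a chain isomorphism
$\Omega^\ubul_{DR}(X,\eta_1,m)\cong\Omega^\ubul_{DR}(X,\eta_2,m)$
by the very exponential-gauge you describe (multiplication by $e^{\pm gs}\bmod s^m$), and then passes to the inverse limit; you instead run the argument directly at $\infty$ in the completed tensor product $\Omega^\ubul_{DR}(X)\widehat\otimes_\cc\widehat R$. The latter is slightly cleaner in that it avoids invoking exactness of $\varprojlim$, but it is not a materially different proof. Two small points of bookkeeping worth fixing. First, $e^{-gs}$ depends on the base point, so it cannot be pulled out of the tensor symbol as in ``$\omega\otimes e^{-gs}f(s)$''; write instead $\Phi(\alpha)=e^{-gs}\cdot\alpha$ where $e^{-gs}\in\Omega^0_{DR}(X)\widehat\otimes\widehat R$ and $\cdot$ is the algebra product in $\Omega^\ubul_{DR}(X)\widehat\otimes\widehat R$. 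Second, the sign you dismiss as trivial is actually the nontrivial one: the sign from moving the $0$-form $e^{-gs}$ past $\omega$ is indeed trivial, but moving the $1$-form $dg$ across the $i$-form $\omega$ introduces $(-1)^i$. With the paper's stated convention $\omega\mapsto d\omega+(\omega\wedge\eta)\,s$ this makes your displayed combination parity-dependent; but note that convention does not square to zero as literally written, and the intended differential is $\omega\mapsto d\omega+s\,\eta\wedge\omega$ (equivalently $d\omega+(-1)^{|\omega|}\omega\wedge\eta\,s$). With that convention the Leibniz computation gives $D_2(e^{-gs}\alpha)=e^{-gs}\bigl(d\alpha+s(\eta_2-dg)\wedge\alpha\bigr)$, so if $\eta_1-\eta_2=dg$ the correct gauge is multiplication by $e^{gs}$, not $e^{-gs}$. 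This is a sign-of-$g$ flip and does not affect the substance of the argument; with it corrected, the proof is sound.
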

\begin{proof}
According to the proof of \cite[Corollary 3.13]{bw2}, one can construct an isomorphism between the two complexes $\Omega_{DR}^\ubul(X, \eta_1, m)$ and $\Omega_{DR}^\ubul(X, \eta_2, m)$, and hence their cohomology groups. Since taking inverse limit is exact in this case, the statement follows. 
\end{proof}

\begin{prop}
Under the above notations, one has the following isomorphism 
\begin{equation}\label{eq1}
H^i\left(X, \sL^\phi\otimes_R R/(s^m)\right)\cong H^i\left(\Omega^\ubul_{DR}(X, \eta, m)\right)
\end{equation}
as $R/(s^m)$-modules. 
\end{prop}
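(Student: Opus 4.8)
The plan is to interpret both sides as hypercohomology of sheaves on $X$ and to exhibit a quasi-isomorphism of complexes of sheaves, then take global sections (using that $X$, being a smooth manifold, admits soft resolutions, so sheaf hypercohomology of a bounded-below complex of $\cc$-vector-space sheaves is computed by the de Rham-type resolution). First I would observe that the local system $\sL^\phi\otimes_R R/(s^m)$ has finite-rank free stalk $R/(s^m)$ over $\cc$, with monodromy acting through $\phi$ by multiplication by $t=1+s$; its flat sections are the kernel of the connection $d\otimes\id$ on the associated flat bundle. The key point is that the sheafified thickened de Rham complex $\underline{\Omega}^\ubul_{DR}(X,\eta,m)$ — whose degree-$i$ term is the sheaf of smooth $i$-forms tensored with $R/(s^m)$ and whose differential is $d\otimes\id+(\wedge\eta)\cdot s$ — is precisely a fine (hence acyclic) resolution of $\sL^\phi\otimes_R R/(s^m)$. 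Indeed, locally on a contractible open $U\subset X$ one can write $\eta|_U=dg$ for a smooth function $g$, and then conjugation by $e^{-gs}$ (a well-defined automorphism of $\Omega^\ubul_{DR}(U)\otimes R/(s^m)$ since $s$ is nilpotent) transforms the twisted differential $d\otimes\id+(\wedge\eta)\cdot s$ into the untwisted $d\otimes\id$. By the Poincar\'e lemma the untwisted complex resolves the constant sheaf $(R/(s^m))_U$, and the conjugating automorphism identifies this constant sheaf, as a sheaf with the induced $R$-action, with $\sL^\phi\otimes_R R/(s^m)$ restricted to $U$.

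The steps, in order, would be: (1) define the complex of sheaves $\underline{\Omega}^\ubul_{DR}(X,\eta,m)$ with its $\cc$-linear twisted differential, and note each term is a fine sheaf of $R/(s^m)$-modules, so $H^i$ of its complex of global sections computes its hypercohomology; (2) construct the local trivialization: on a contractible $U$ with $\eta|_U=dg_U$, the multiplication map $e^{g_U s}\colon \Omega^\ubul_{DR}(U)\otimes R/(s^m)\to\Omega^\ubul_{DR}(U)\otimes R/(s^m)$ intertwines $d\otimes\id$ with the twisted differential (since $d(e^{g_Us}\omega)=e^{g_Us}(d\omega+s\,dg_U\wedge\omega)=e^{g_Us}(d\otimes\id+(\wedge\eta)s)\omega$, using $s^m=0$ so $e^{g_Us}$ is a polynomial); (3) deduce from the Poincar\'e lemma that $\underline{\Omega}^\ubul_{DR}(X,\eta,m)$ is a resolution of the locally constant sheaf $\sH$ with stalk $R/(s^m)$ whose transition functions are $e^{(g_U-g_V)s}$ — and identify $\sH$ with $\sL^\phi\otimes_R R/(s^m)$ by checking that, going around a loop $\gamma$, the transition cocycle multiplies a flat section by $e^{s\int_\gamma\eta}=e^{s\,\phi(\gamma)}=(1+s)^{\phi(\gamma)}=t^{\phi(\gamma)}$, which is exactly the monodromy of $\sL^\phi\otimes_R R/(s^m)$; (4) conclude $H^i(X,\sL^\phi\otimes_R R/(s^m))\cong H^i(X,\underline{\Omega}^\ubul_{DR}(X,\eta,m))\cong H^i(\Omega^\ubul_{DR}(X,\eta,m))$, all isomorphisms being $R/(s^m)$-linear since every map above is $R/(s^m)$-linear (the conjugating factors $e^{g_Us}$ commute with the $R$-action).

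The main obstacle — really the only subtle point — is step (3): correctly identifying the sheaf being resolved as $\sL^\phi\otimes_R R/(s^m)$ rather than some twist of it, and making sure the monodromy computation $e^{s\int_\gamma\eta}=t^{\phi(\gamma)}$ is carried out with the right conventions so that the resulting isomorphism is genuinely $R/(s^m)$-linear and not merely $\cc$-linear. One must be careful that $\eta=\eta^\phi$ represents $\phi^*(1_\cc)$, so that $\int_\gamma\eta=\langle\phi^*(1),\gamma\rangle=\phi(\gamma)\in\zz$ for $\gamma\in\pi_1(X)$; the identity $e^{s\cdot n}=(1+s)^n$ in $R/(s^m)$ (valid for all $n\in\zz$ since $1+s$ is a unit there) then pins down the identification. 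Everything else — fineness of the sheaves, exactness of the resolution via the Poincar\'e lemma, $R/(s^m)$-linearity — is routine once this identification is in place. Alternatively, one could cite \cite{bw2} directly, since this is essentially \cite[Corollary 3.13]{bw2} or its proof; but the self-contained argument above via local conjugation by $e^{g_Us}$ is short enough to include.
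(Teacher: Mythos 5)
Your overall strategy — sheafify the thickened de Rham complex, show it is a fine resolution of a rank-one $R/(s^m)$-local system by conjugating the twisted differential $d\otimes\id+\wedge\eta\cdot s$ locally to $d\otimes\id$ using $e^{\pm g_Us}$, and then compare with $\sL^\phi\otimes_R R/(s^m)$ — is exactly the content behind the references the paper cites (Goldman--Millson and [BW2]), so the route is the same one the authors have in mind. However, step (3) contains a genuine mathematical error. You assert the identity $e^{sn}=(1+s)^n$ in $R/(s^m)$ ``valid for all $n\in\zz$ since $1+s$ is a unit there.'' This is false as soon as $m\geq 3$ and $n\neq 0,1$: for instance, in $R/(s^3)$ one has $e^{2s}=1+2s+2s^2$ while $(1+s)^2=1+2s+s^2$. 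Unit-ness of $1+s$ has no bearing on the identity. Consequently the local system your conjugation argument actually resolves has monodromy $\gamma\mapsto e^{s\phi(\gamma)}$, which is \emph{not} isomorphic, as a sheaf of $R/(s^m)$-modules, to $\sL^\phi\otimes_R R/(s^m)$, whose monodromy is $\gamma\mapsto(1+s)^{\phi(\gamma)}$ (an $R/(s^m)$-linear intertwiner would have to be multiplication by a unit, which commutes with everything and so cannot conjugate $1+s$ to $e^s$).

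The proposition is still true, but closing this gap requires an additional argument which you do not supply and which cannot be swept away by adjusting conventions: let $\sM$ denote the local system with monodromy $e^{s\phi(\gamma)}$. The $\cc$-algebra automorphism $\sigma$ of $R/(s^m)$ with $\sigma(s)=e^s-1$ satisfies $\sigma(1+s)=e^s$, so $\sM\cong\sigma^*\bigl(\sL^\phi\otimes_R R/(s^m)\bigr)$ as $R/(s^m)$-local systems, and hence $H^i(X,\sM)\cong\sigma^*H^i(X,\sL^\phi\otimes_R R/(s^m))$. Since $\sigma(s)=s\cdot u$ with $u=(e^s-1)/s$ a unit, $\sigma$ preserves the $(s)$-adic filtration, and for every finitely generated $R/(s^m)$-module $M$ one has a (non-canonical) isomorphism $\sigma^*M\cong M$; applying this to the cohomology gives the statement. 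So the isomorphism in the proposition holds but is not canonical with the paper's normalization $s=t-1$; your argument as written would only be literally correct if the paper had instead set $s=\log t$, or equivalently used the differential $d\otimes\id+\wedge\eta\cdot\log(1+s)$. Either state this re-parameterization explicitly, or add the extra isomorphism step above; as it stands, the claimed equality $e^{s\phi(\gamma)}=t^{\phi(\gamma)}$ is simply wrong.
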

\begin{proof}
In fact, $\Omega^\ubul_{DR}(X, \eta, m)$ is the de Rham resolution (as $R/(s^m)$-modules) of the rank one $R/(s^m)$-local system $\sL^\phi\otimes_R R/(s^m)$ (see \cite[Section 6]{gm} and \cite{bw2}). Hence the isomorphism follows. 
\end{proof}

Denote the completion of $R$ at the maximal ideal $(s)$ by $\widehat{R}$, that is, $\widehat{R}=\displaystyle\varprojlim_m R/(s^m)$. Note that $\widehat{R}= \cc[[s]]$ is a principal ideal domain and a regular local ring. There is a natural $\widehat{R}$-isomorphism for any finitely generated $R$-module $A$,
$$ A \otimes_R \widehat{R} \cong \widehat{A},$$ 
where $\widehat{A}$ is the $(s)$-adic completion of $A$.

Let 
$$\Omega^\ubul_{DR}(X, \eta, \infty):=\cdots\to\Omega_{DR}^i(X)\otimes_\cc \widehat{R}\xrightarrow{d\otimes \id+\wedge \eta\cdot s}\Omega_{DR}^{i+1}(X)\otimes_\cc \widehat{R}\to\cdots$$
be the inverse limit of $\Omega^\ubul_{DR}(X, \eta, m)$ as $m\to +\infty$. Since the left side of the isomorphism (\ref{eq1}) is also functorial on $m$, we can take the inverse limit of the cohomology groups. Since the cohomology groups are finite dimensional $\cc$-vector spaces, the Mittag-Leffler condition is automatic. Thus we have the following. 
\begin{cor} \label{hat} As $\widehat{R}$-modules,
$$H^i(X,\sL^\phi\otimes_R \widehat{R})\cong H^i\left(\Omega^\ubul_{DR}(X, \eta, \infty)\right).$$
\end{cor}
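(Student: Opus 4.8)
The statement is a direct consequence of the isomorphism (\ref{eq1}) together with a passage to the inverse limit. The plan is as follows. First I would recall that for each finite $m$ we have the $R/(s^m)$-module isomorphism
$$H^i\left(X, \sL^\phi\otimes_R R/(s^m)\right)\cong H^i\left(\Omega^\ubul_{DR}(X, \eta, m)\right),$$
and that this isomorphism is compatible with the natural transition maps $R/(s^{m+1})\to R/(s^m)$ on both sides; this compatibility is built into the construction of the de Rham resolution, so it should be checked but is essentially formal. Next I would take the inverse limit over $m$. On the right-hand side, by definition $\Omega^\ubul_{DR}(X, \eta, \infty)=\varprojlim_m \Omega^\ubul_{DR}(X, \eta, m)$, and since $X$ has finite homotopy type each $\Omega^i_{DR}(X)$ can be taken finite-dimensional (or one uses that the relevant cohomology is finite-dimensional), so each term $\Omega^i_{DR}(X)\otimes_\cc R/(s^m)$ is a finite-dimensional $\cc$-vector space; the transition maps are surjective, hence the Mittag-Leffler condition holds and $\varprojlim_m$ is exact on this tower of complexes. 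Therefore $H^i\left(\Omega^\ubul_{DR}(X, \eta, \infty)\right)\cong \varprojlim_m H^i\left(\Omega^\ubul_{DR}(X, \eta, m)\right)$.

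On the left-hand side, I would identify $\varprojlim_m H^i\left(X, \sL^\phi\otimes_R R/(s^m)\right)$ with $H^i(X,\sL^\phi\otimes_R \widehat{R})$. Here one uses that $\sL^\phi\otimes_R \widehat{R}$ is computed by the complex obtained from a finite free $R$-chain model of $X$ by tensoring with $\widehat{R}$, and that $\widehat{R}=\varprojlim_m R/(s^m)$ with the $R/(s^m)$ a tower of finitely generated (indeed finite-dimensional over $\cc$ after tensoring with the finite chain complex) modules with surjective transition maps — again Mittag-Leffler — so cohomology commutes with this inverse limit. Equivalently, one invokes the remark in the excerpt that $A\otimes_R\widehat R\cong\widehat A$ for finitely generated $R$-modules $A$, applied to the cohomology modules $H^i(X,\sL^\phi)$, combined with $\varprojlim_m H^i(X,\sL^\phi\otimes_R R/(s^m))\cong \widehat{H^i(X,\sL^\phi)}$, which follows from the short exact sequences relating $\sL^\phi$, multiplication by $s^m$, and $\sL^\phi\otimes R/(s^m)$ together with the finiteness. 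Chaining these isomorphisms with the limit of (\ref{eq1}) gives the result.

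The only genuine point requiring care — the "main obstacle," though it is mild — is the verification that all the limits behave well: namely that the Mittag-Leffler condition is met at each stage (so that $\varprojlim^1$ terms vanish and cohomology commutes with $\varprojlim$), and that the isomorphisms (\ref{eq1}) are natural in $m$ so that they assemble to an isomorphism of pro-systems. Both follow from the fact that $X$ is of finite homotopy type, which forces the relevant cohomology groups $H^i(X,\sL^\phi\otimes_R R/(s^m))$ to be finite-dimensional over $\cc$; a tower of finite-dimensional vector spaces automatically satisfies Mittag-Leffler. The naturality in $m$ is inherited from the functoriality of the de Rham resolution in the coefficient ring, as already used implicitly in the statement preceding the corollary. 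Once these are in place, the proof is the two-line argument "take $\varprojlim_m$ of (\ref{eq1}), using Mittag-Leffler on both sides."
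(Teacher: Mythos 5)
Your proposal is correct and follows essentially the same route as the paper: take $\varprojlim_m$ of the isomorphism (\ref{eq1}), observing that the Mittag--Leffler condition holds automatically because the cohomology groups in question are finite-dimensional over $\cc$, and identify the resulting limits with the two sides of the claimed isomorphism. Your write-up spells out the left-hand-side identification (via $A\otimes_R\widehat R\cong\widehat A$ and surjectivity of transition maps) somewhat more explicitly than the paper, which simply asserts functoriality in $m$ and flatness of $\widehat R$, but the underlying argument is the same.
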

Notice that on the left side, we do not need to take the derived tensor product, since $\widehat{R}$ is a flat $R$-module.

Since $\widehat{R}$ is faithfully flat over the local ring $R_{(s)}$, taking tensor product $\otimes_R \widehat{R}$ preserves the torsion submodule supported at the maximal ideal $(s)$. 
\begin{cor}\label{deRham}  As $\widehat{R}$-modules,
$$   T_i^1(X, \phi)\otimes_R \widehat{R} \cong  \mathrm{Tor} \left( H^{i+1}(X, \sL^\phi\otimes_R \widehat{R}) \right)\cong \mathrm{Tor}\left(H^{i+1}\left(\Omega^\ubul_{DR}(X, \eta, \infty)\right)\right)$$
where ${\rm Tor}$ means taking the torsion part as $\widehat{R}$-modules. 
\end{cor}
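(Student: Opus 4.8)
The plan is to combine the two preceding corollaries with a standard commutative-algebra fact about completions. Specifically, Corollary \ref{hat} identifies $H^{i+1}(X,\sL^\phi\otimes_R\widehat{R})$ with $H^{i+1}\left(\Omega^\ubul_{DR}(X,\eta,\infty)\right)$ as $\widehat{R}$-modules, so it suffices to relate the torsion submodule of $H^{i+1}(X,\sL^\phi\otimes_R\widehat{R})$ to $T_i^1(X,\phi)\otimes_R\widehat{R}$. For this I would start from the Universal Coefficient isomorphism (\ref{uct}), which gives $H_i(X^\phi)\cong \mathrm{Free}\left(H^i(X,\overline{\sL^\phi})\right)\oplus\mathrm{Tor}\left(H^{i+1}(X,\overline{\sL^\phi})\right)$ over $R$. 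Passing to the torsion part and extracting the eigenvalue-$1$ component, $T_i^1(X,\phi)$ is the $(t-1)=(s)$-primary part of $\mathrm{Tor}\left(H^{i+1}(X,\overline{\sL^\phi})\right)$, equivalently (via (\ref{max2}), noting the involution $t\mapsto t^{-1}$ fixes the maximal ideal $(t-1)$) the $(s)$-primary part of $\mathrm{Tor}\left(H^{i+1}(X,\sL^\phi)\right)$.

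Next I would invoke the flatness properties of $\widehat{R}=\cc[[s]]$ over $R$ spelled out in the text just before the statement: $\widehat{R}$ is flat over $R$, and faithfully flat over the localization $R_{(s)}$. Flatness gives $H^{i+1}(X,\sL^\phi\otimes_R\widehat{R})\cong H^{i+1}(X,\sL^\phi)\otimes_R\widehat{R}$ with no higher derived terms. Faithful flatness over $R_{(s)}$ is what ensures that $\otimes_R\widehat{R}$ preserves exactly the torsion submodule supported at $(s)$: for a finitely generated $R$-module $M$, the decomposition $M\cong R^r\oplus\bigoplus_\lambda M_\lambda$ shows $M\otimes_R\widehat{R}\cong \widehat{R}^{\,r}\oplus M_{(s)\text{-part}}$, since each $(t-\lambda)$-torsion summand with $\lambda\neq 1$ becomes invertible (the element $t-\lambda$ is a unit in $\widehat{R}$) hence dies, while $\widehat{R}^{\,r}$ is torsion-free. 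Therefore $\mathrm{Tor}\left(H^{i+1}(X,\sL^\phi)\otimes_R\widehat{R}\right)$ equals the $(s)$-primary part of $\mathrm{Tor}\left(H^{i+1}(X,\sL^\phi)\right)$ tensored with $\widehat{R}$, which is precisely $T_i^1(X,\phi)\otimes_R\widehat{R}$. Chaining these identifications with Corollary \ref{hat} yields both isomorphisms in the statement.

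Assembling the argument: write
\begin{equation*}
T_i^1(X,\phi)\otimes_R\widehat{R}\;\cong\;\mathrm{Tor}\left(H^{i+1}(X,\sL^\phi)\right)_{(s)}\otimes_R\widehat{R}\;\cong\;\mathrm{Tor}\left(H^{i+1}(X,\sL^\phi\otimes_R\widehat{R})\right)\;\cong\;\mathrm{Tor}\left(H^{i+1}\left(\Omega^\ubul_{DR}(X,\eta,\infty)\right)\right),
\end{equation*}
where the first step uses (\ref{uct}), (\ref{max2}) and the definition of $T_i^1$, the second uses flatness of $\widehat{R}$ over $R$ together with faithful flatness over $R_{(s)}$ to commute $\mathrm{Tor}$ with $\otimes_R\widehat{R}$ and to kill the eigenvalue-$\lambda$ summands for $\lambda\neq 1$, and the last is Corollary \ref{hat}. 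I expect the main (though mild) obstacle to be bookkeeping with the involution $t\mapsto t^{-1}$: one must check it sends the maximal ideal $(t-1)$ to itself and hence induces an isomorphism on $(s)$-adic completions, so that the overline appearing in (\ref{uct}) and (\ref{max2}) does not affect the eigenvalue-$1$ torsion; this is straightforward since $t^{-1}-1 = -t^{-1}(t-1)$ and $t^{-1}$ is a unit in $\widehat{R}$. The only other point requiring a line of care is the distinction between the $(s)$-primary torsion submodule and the full $R$-torsion submodule of $H^{i+1}(X,\sL^\phi\otimes_R\widehat{R})$ — but over $\widehat{R}=\cc[[s]]$, which has the single nonzero prime $(s)$, these coincide, so no ambiguity remains.
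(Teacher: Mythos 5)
Your proposal is correct and follows essentially the same route as the paper's proof: you combine the Universal Coefficient identification (\ref{uct}) and (\ref{max2}), the observation that the involution $t\mapsto t^{-1}$ fixes the maximal ideal $(t-1)$ (and hence does not disturb the eigenvalue-$1$ part), the faithful flatness of $\widehat{R}$ over $R_{(s)}$ to preserve exactly the $(s)$-supported torsion after tensoring, and Corollary \ref{hat}. The paper simply compresses this into a two-sentence proof, while you have usefully expanded the flatness/localization bookkeeping.
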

\begin{proof}
The first isomorphism comes from  (\ref{uct}) and the fact that the involution does not change the eigenvalue $1$ part. The second isomorphism follows from Corollary \ref{hat}.
\end{proof}

So far, we have given an algorithm to compute $T^1_i(X, \phi)$ only using the de Rham differential algebra of $X$. In fact, this construction applies to any differential algebra and it only depends on the homotopy type of the differential algebra. 

Let $(\sA^\ubul, d)$ be a differential algebra, and let $\eta\in \sA^1$ be a 1-cycle. We define complexes of $R/(s^m)$-modules and $\widehat{R}$-modules respectively,
$$\sA^\ubul(\eta, m)\stackrel{\textrm{def}}{:=}\cdots\to \sA^i\otimes_\cc R/(s^m)\xrightarrow{d\otimes\id+\wedge\eta\cdot s}\sA^{i+1}\otimes_\cc R/(s^m)\to \cdots$$
and
$$\sA^\ubul(\eta, \infty)\stackrel{\textrm{def}}{:=}\cdots\to \sA^i\otimes_\cc \widehat{R}\xrightarrow{d\otimes\id+\wedge\eta\cdot s}\sA^{i+1}\otimes_\cc \widehat{R}\to \cdots$$

\begin{prop}\label{homotopyequiv}
Let $F: \sA^\ubul\to \sB^\ubul$ be a homomorphism of differential algebras, which is a quasi-isomorphism. Let $\eta\in\sA^1$ be a 1-cycle. Then the natural homomorphism $F_*: \sA^\ubul(\eta, \infty)\to \sB^\ubul(F(\eta), \infty)$ induced by $F$ is a quasi-isomorphism of complexes of $\widehat{R}$-modules. 
\end{prop}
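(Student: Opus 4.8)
The plan is to filter both complexes by powers of the variable $s$ and compare the associated graded pieces, which untwist the differential and reduce everything to the given quasi-isomorphism $F$. Concretely, on $\sA^\ubul(\eta,\infty)$ consider the decreasing filtration $\mathcal{F}^p$ given by $\sA^\ubul\otimes_\cc s^p\widehat{R}$; this is a filtration by subcomplexes because the twisted differential $d\otimes\id + \wedge\eta\cdot s$ sends $s^p\widehat{R}$ into itself (indeed, $\wedge\eta\cdot s$ raises the $s$-degree). The same filtration works on $\sB^\ubul(F(\eta),\infty)$, and $F_*$ is a filtered map since $F$ is $R$-linear after base change. The associated graded complex $\operatorname{gr}^p\sA^\ubul(\eta,\infty)$ is, in each fixed $s$-degree $p$, just $(\sA^\ubul\otimes_\cc s^p\cc, d\otimes\id)$ — the twisting term $\wedge\eta\cdot s$ strictly increases the filtration degree and hence vanishes on the graded object — so $\operatorname{gr}^p\sA^\ubul(\eta,\infty)\cong(\sA^\ubul,d)$ up to a shift in the $s$-grading, and likewise for $\sB^\ubul$. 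On these graded pieces $\operatorname{gr}F_*$ is literally $F\otimes\id$, which is a quasi-isomorphism by hypothesis.

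From here I would invoke the standard comparison theorem for filtered complexes: a filtered map inducing a quasi-isomorphism on every associated graded piece induces an isomorphism of the $E_1$-pages of the associated spectral sequences, hence an isomorphism of the abutments, provided the filtrations are \emph{complete} and \emph{exhaustive} (equivalently, the spectral sequences converge). Exhaustiveness is clear since $\mathcal{F}^0$ is the whole complex. Completeness is exactly where the completion $\widehat{R}=\cc[[s]]$ enters: $\bigcap_p \mathcal{F}^p = \sA^\ubul\otimes_\cc\bigcap_p s^p\widehat{R}=0$, and $\sA^\ubul(\eta,\infty)=\varprojlim_m \sA^\ubul(\eta,\infty)/\mathcal{F}^m = \varprojlim_m\sA^\ubul(\eta,m)$, so the filtration is complete and separated. (This is the reason the statement is about $\sA^\ubul(\eta,\infty)$ rather than $\sA^\ubul(\eta,m)$, though for finite $m$ the filtration is finite and the argument is even easier; one could also deduce the $\infty$ case from the finite $m$ cases by passing to the inverse limit, using that the cohomology groups are finite-dimensional over $\cc$ in each $s$-degree so Mittag--Leffler holds, exactly as in Corollary \ref{hat}.)

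I expect the main obstacle to be bookkeeping around convergence rather than anything conceptual: one must check that the spectral sequence of the $s$-adic filtration on a complex of (possibly infinite-dimensional) $\cc$-vector spaces tensored with $\widehat R$ genuinely converges, i.e. that no lim$^1$ obstruction appears when passing from $E_\infty$ to the abutment. The clean way to sidestep this is the mapping-cone formulation: let $C^\ubul$ be the cone of $F_*$, filter it by the induced $s$-adic filtration, observe $\operatorname{gr}^p C^\ubul$ is the cone of $F\otimes\id$ on $\operatorname{gr}^p$, which is acyclic, and then use that a complete, separated, exhaustively-filtered complex with acyclic associated graded is itself acyclic (a derived Nakayama / complete-filtration lemma). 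This avoids spectral sequences entirely and makes the completeness hypothesis do all the work in one stroke.
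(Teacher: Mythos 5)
Your proof is correct and takes the same route as the paper: the paper's own proof is just a pointer to a ``standard spectral sequence argument'' in \cite[Theorem 3.16]{bw2}, and that argument is exactly the $s$-adic filtration comparison you spell out (the associated graded untwists the differential to $d\otimes\id$, so $\operatorname{gr}(F_*)=F\otimes\id$ is a quasi-isomorphism, and completeness of the $(s)$-adic filtration on $\varprojlim_m\sA^\ubul(\eta,m)$ gives convergence). Your mapping-cone reformulation via the complete-filtration lemma is just a cleaner packaging of the same argument.
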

\begin{proof}
This follows from a standard spectral sequence argument. See the proof of \cite[Theorem 3.16]{bw2}. 
\end{proof}

\section{Compact K\"ahler manifolds and formality}
A celebrated theorem of Deligne, Griffiths, Morgan and Sullivan claims that the real homotopy type of a compact K\"ahler manifold is formal. Since we only work with complex coefficients, it suffices to know only the complex homotopy type. Let us first recall the definition of a topological space, or a differential algebra, being formal. 

\begin{defn}\label{defnformal}
A differential algebra $(\sA^\ubul, d)$ is called formal, if it can be connected to its cohomology differential algebra $(H^\ubul(\sA^\ubul), 0)$ by a zigzag of quasi-isomorphisms of differential algebras. A smooth manifold is called formal (more precisely, its real homotopy type is formal), if its de Rham complex is formal as a differential algebra. 
\end{defn}
\begin{theorem} \label{dgms}\cite{dgms}
Any connected compact K\"ahler manifold is formal. 
\end{theorem}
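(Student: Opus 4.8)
The statement is the theorem of Deligne, Griffiths, Morgan and Sullivan \cite{dgms}, and the plan is to reproduce its proof; its single non-formal ingredient is the $\partial\bar\partial$-lemma. Fix a K\"ahler metric on $X$ and let $\sA^\ubul:=\Omega^\ubul_{DR}(X)$ denote the complex-valued de Rham complex, with $d=\partial+\bar\partial$ and the twisted differential $d^c:=\tfrac{i}{2}(\bar\partial-\partial)$. Then $d^c$ is a degree-one derivation, $(d^c)^2=0$, $dd^c=-d^cd$, and $\im(dd^c)=\im(\partial\bar\partial)$. K\"ahler Hodge theory provides Hodge decompositions for each of the Laplacians $\Delta_d,\Delta_\partial,\Delta_{\bar\partial}$ and, through the K\"ahler identities, the relations $\Delta_\partial=\Delta_{\bar\partial}$ and $\Delta_d=2\Delta_{\bar\partial}$; consequently a $d$-harmonic form is simultaneously closed for $\partial,\bar\partial,\partial^*,\bar\partial^*$, and the harmonic forms of each degree represent both the $d$-cohomology and the $d^c$-cohomology of $X$. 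From this one extracts the $\partial\bar\partial$-lemma in the form $\ker d\cap\im(d^c)=\ker(d^c)\cap\im d=\im(dd^c)$; this identity, together with the existence of harmonic representatives for both cohomologies, is the only geometric input used below.

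Next I would introduce the sub-differential-algebra $\sA_c^\ubul:=\ker(d^c)\subset\sA^\ubul$: it is a subalgebra because $d^c$ is a derivation, and it is preserved by $d$ because $dd^c=-d^cd$. Since $(d^c)^2=0$ we have $d^c\sA^\ubul\subset\sA_c^\ubul$, so $d$ descends to the quotient algebra $\sB^\ubul:=\sA_c^\ubul/d^c\sA^\ubul$; moreover the induced differential on $\sB^\ubul$ vanishes, because for $\alpha\in\ker(d^c)$ the form $d\alpha$ is $d$-closed, $d^c$-closed and $d$-exact, hence lies in $\im(dd^c)\subset d^c\sA^\ubul$. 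This yields a diagram of morphisms of differential algebras
$$(\sA^\ubul,d)\ \xleftarrow{\ \iota\ }\ (\sA_c^\ubul,d)\ \xrightarrow{\ \pi\ }\ (\sB^\ubul,0),$$
and I would then check that both arrows are quasi-isomorphisms using the $\partial\bar\partial$-lemma. For $\iota$: surjectivity on cohomology holds since every $d$-class has a harmonic, hence $d^c$-closed, representative; injectivity holds since if $\alpha\in\sA_c^\ubul$ is $d$-exact then $\alpha\in\ker(d^c)\cap\im d=\im(dd^c)$, so $\alpha=d(d^c\gamma)$ with $d^c\gamma\in\sA_c^\ubul$. For $\pi$: surjectivity is again supplied by harmonic representatives (which are both $d$- and $d^c$-closed, since $\Delta_{d^c}$ is proportional to $\Delta_d$), and injectivity is the assertion that a $d$-closed form lying in $\im(d^c)$ lies in $\im(dd^c)$, once more the $\partial\bar\partial$-lemma. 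Finally, since $\pi$ is a quasi-isomorphism onto a complex with zero differential, $\pi_*$ and $\iota_*$ give isomorphisms of graded algebras $\sB^\ubul\cong H^\ubul(\sA_c^\ubul,d)\cong H^\ubul(\sA^\ubul,d)$; appending this isomorphism of differential algebras to the diagram produces a zigzag of quasi-isomorphisms connecting $(\sA^\ubul,d)$ to its cohomology algebra $(H^\ubul(\sA^\ubul),0)$, so $X$ is formal in the sense of Definition \ref{defnformal}.

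The main obstacle, and the only input that is not purely algebraic, is the $\partial\bar\partial$-lemma itself, which rests on the full Hodge theory of compact K\"ahler manifolds: the spectral decompositions of $\Delta_d$, $\Delta_\partial$ and $\Delta_{\bar\partial}$, the vanishing of the mixed Laplacians, and the K\"ahler identities forcing $\Delta_d=2\Delta_{\bar\partial}$. Everything after that is formal manipulation of differential graded algebras. Since this is exactly the content of \cite{dgms}, in the paper we will simply invoke their result rather than reprove it.
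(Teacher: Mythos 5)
Your proof is correct and is precisely the Deligne--Griffiths--Morgan--Sullivan argument via the $\partial\bar\partial$-lemma, which is what the paper cites; the paper does not reprove the theorem but simply refers to \cite{dgms}, so your reconstruction matches the intended source exactly.
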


\begin{prop} \label{algebraformal}
Let $(\sA^\ubul, 0)$ be a differential algebra with zero differentials. Then for any $\eta\in \sA^1$ and any $i\in\zz_{\geq 0}$, $H^i(\sA^\ubul(\eta, \infty))$ is semi-simple as $\widehat{R}$-module. 
\end{prop}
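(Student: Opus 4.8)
The plan is to analyze the complex $\sA^\ubul(\eta, \infty)$ directly, exploiting that the differential of $\sA^\ubul$ is zero so that the only differential in $\sA^\ubul(\eta,\infty)$ is multiplication by $\wedge\eta\cdot s$. Concretely, $\sA^\ubul(\eta,\infty)$ is the complex of free $\widehat R$-modules $\cdots \to \sA^i\otimes_\cc\widehat R \xrightarrow{\wedge\eta\cdot s} \sA^{i+1}\otimes_\cc\widehat R\to\cdots$, and I want to show each cohomology $\widehat R$-module is semisimple, i.e.\ annihilated by $s=t-1$ on its torsion part. Since $\widehat R=\cc[[s]]$ is a PID, semisimplicity of a finitely generated module means its torsion submodule is a direct sum of copies of $\widehat R/(s)$; equivalently, no Jordan block of size $\geq 2$, equivalently $s$ kills the torsion.

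First I would reduce to pure linear algebra over $\cc$. Write the map $L=\wedge\eta\colon\sA^i\to\sA^{i+1}$; then the differential is $sL$ (tensored up to $\widehat R$). Since $\eta$ is a $1$-cycle in a differential algebra with zero differential, $L^2=\wedge\eta\wedge\eta=0$ automatically (because $\eta\wedge\eta=0$ for a $1$-form, or more generally by graded-commutativity $\eta\wedge\eta=-\eta\wedge\eta$), so $(\sA^\ubul, L)$ is itself a cochain complex of $\cc$-vector spaces. Decompose $\sA^i = B^i\oplus \sH^i\oplus C^i$ where $B^i=\im(L\colon\sA^{i-1}\to\sA^i)$, $\sH^i$ maps isomorphically onto $H^i(\sA^\ubul, L)$, and $C^i\cong B^{i+1}$ via $L$. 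In this decomposition $L$ restricted to $C^i\to B^{i+1}$ is an isomorphism, and $L$ vanishes on $B^i\oplus\sH^i$. Then $\sA^\ubul(\eta,\infty)$ splits as a direct sum of three kinds of elementary complexes: a contractible-up-to-$s$ two-term complex $C^i\otimes\widehat R\xrightarrow{s\cdot\cong}B^{i+1}\otimes\widehat R$ whose cohomology is $(\widehat R/(s))^{\dim C^i}$ sitting in degree $i+1$, and zero-differential pieces $\sH^i\otimes\widehat R$ contributing a free module. So $H^{i+1}(\sA^\ubul(\eta,\infty)) \cong (\sH^{i+1}\otimes\widehat R)\ \oplus\ (\widehat R/(s))^{\dim C^i}$, and its torsion part is $(\widehat R/(s))^{\dim C^i}$, which is visibly semisimple.

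The main technical point — and the place I would be most careful — is justifying that $\widehat R$-freeness of the terms makes the decomposition of the $\cc$-complex $(\sA^\ubul,L)$ into elementary pieces lift verbatim to a decomposition of $\sA^\ubul(\eta,\infty)$ over $\widehat R$: this is fine because the splitting $\sA^i = B^i\oplus\sH^i\oplus C^i$ is a splitting of $\cc$-vector spaces, tensoring with $\widehat R$ is exact, and the only differential is $s\cdot L$ which respects the decomposition. One subtlety is that $\sA^i$ may be infinite-dimensional; but $H^i(\sA^\ubul(\eta,\infty))$ in the intended application (de Rham complex of a compact manifold) is finitely generated over $\widehat R$, and in any case the argument above is degreewise and does not require finiteness — the torsion submodule of each cohomology is still $s$-torsion-killed regardless. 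If one wants the statement for genuinely arbitrary $\sA^\ubul$, I would either add a finite-generation hypothesis on $H^\ubul(\sA^\ubul)$ or simply note that the conclusion "torsion killed by $s$" holds termwise in any event.

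Alternatively, and perhaps more cleanly, I would present this via a short exact sequence of complexes $0\to \sA^\ubul(\eta,\infty)\xrightarrow{s}\sA^\ubul(\eta,\infty)\to (\sA^\ubul, 0)\to 0$ (reduction mod $s$, using that the differential is $sL$ so it becomes $0$ after killing $s$), giving a long exact sequence relating $H^i(\sA^\ubul(\eta,\infty))$, its own multiplication-by-$s$, and $H^i(\sA^\ubul)$; chasing this shows $\mathrm{coker}(s\mid H^i)$ and $\ker(s\mid H^{i+1})$ fit into $0\to\mathrm{coker}(s)\to H^i(\sA^\ubul)\to\ker(s\mid H^{i+1})\to 0$, and then a Nakayama-type argument over $\widehat R$ forces the torsion of $H^i(\sA^\ubul(\eta,\infty))$ to be killed by $s$. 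Either route works; I expect the explicit elementary-complex decomposition to be the quickest to write, so that is the one I would carry out.
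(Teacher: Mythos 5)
Your main argument is correct, and it is in substance the same as the paper's proof, just made explicit. The paper gives $\widehat{R}=\cc[[s]]$ the grading $\deg s=1$, observes that $\sA^\ubul(\eta,\infty)$ is then a complex of free graded $\widehat{R}$-modules generated in degree $0$ with graded differential $\wedge\eta\cdot s$ of degree $1$, and asserts it is ``totally straightforward'' that the cohomology of such a complex is semisimple. Your decomposition $\sA^i=B^i\oplus\sH^i\oplus C^i$ (using $L^2=0$, which you correctly justify by graded-commutativity) is exactly the computation hiding behind that phrase: the kernel of $sL$ is $\ker L\otimes\widehat{R}$, free and generated in degree $0$; the image is $s\cdot\im L\otimes\widehat{R}$, sitting inside it shifted into degree $1$; and the quotient is $(\sH^{i}\otimes\widehat{R})\oplus(\widehat{R}/(s))^{\dim\im L_{i-1}}$, visibly semisimple. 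Your remarks on finite-dimensionality are also fine --- the degreewise argument does not need it, and in the application (formality, Theorem~\ref{dgms}) one replaces the de~Rham complex by the finite-dimensional cohomology algebra anyway.

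One caveat on your proposed alternative. From $0\to\sA^\ubul(\eta,\infty)\xrightarrow{s}\sA^\ubul(\eta,\infty)\to(\sA^\ubul,0)\to0$ you do get $0\to\mathrm{coker}(s\mid H^i)\to\sA^i\to\ker(s\mid H^{i+1})\to0$, but this by itself does \emph{not} force semisimplicity. For a finitely generated $\widehat{R}$-module $\widehat{R}^r\oplus\bigoplus_j\widehat{R}/(s^{k_j})$, the $\cc$-dimensions of $\ker(s)$ and $\mathrm{coker}(s)$ depend only on $r$ and the \emph{number} of torsion summands, not on their sizes $k_j$; so the short exact sequence imposes no constraint on the Jordan block sizes, and a bare Nakayama-style count cannot conclude. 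To repair it you would need to use that the connecting map $\delta$ is induced by $L$ and hence hits classes represented by degree-$0$ cocycles --- which is the grading information that the abstract LES discards, and which amounts to rerunning your first argument. You were right to carry out the explicit decomposition rather than this sketch.
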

\begin{proof}
Since the differentials of $\sA^\ubul$ are zero, by definition, 
$$\sA^\ubul(\eta, \infty)=(\sA^\ubul\otimes_\cc \widehat{R}, \wedge\eta\cdot s). $$
The algebra $\widehat{R}=\cc[[s]]$ is a graded algebra, where we let $\deg(s)=1$. Then the complex $\sA^\ubul(\eta, \infty)$ becomes a complex of graded $\widehat{R}$-modules, if we let all elements in $\sA^i$ have degree $0$. Thus, as a complex of free graded $\widehat{R}$-modules, $\sA^\ubul(\eta, \infty)$ is of the following form 
\begin{equation}\label{complex}
\cdots\longrightarrow\widehat{R}^{\oplus n_{i-1}}\overset{\wedge \eta\cdot s}{\longrightarrow} \widehat{R}^{\oplus n_{i}}\overset{\wedge \eta\cdot s}{\longrightarrow} \widehat{R}^{\oplus n_{i+1}}\longrightarrow \cdots
\end{equation} Note that the map $\wedge \eta \cdot s$ is graded of degree $1$. 
It is totally straightforward to check that given any complex of free graded $\widehat{R}$-modules as in (\ref{complex}), its cohomology modules are semi-simple $\widehat{R}$-modules. 
\end{proof}

\begin{proof}[Proof of Proposition \ref{formal}]
The proposition follows immediately from Corollary \ref{deRham}, Proposition \ref{homotopyequiv}, Proposition \ref{algebraformal} and Definition \ref{defnformal}. 
\end{proof}

\begin{proof}[Proof of Theorem \ref{main1}]
Note that the finite (unramified) cover of a compact K\"ahler manifold is still compact K\"ahler. So Proposition \ref{torsion} and (\ref{finte}) allow us to reduce the proof to the case when $\lambda=1$.

When $\lambda=1$, the claim follows from Proposition \ref{formal} and Theorem \ref{dgms}.
\end{proof}

\section{Quasi-projective varieties and Gysin model}

Let $X$ be a smooth complex quasi-projective variety. Fix a good compactification $\overline{X}$ of $X$, i.e.,  we require that the boundary divisor is a simple normal crossing divisor. Let $D=\bigcup_j D_j$ be the boundary divisor $\overline{X}\setminus X$. Following the standard conventions, we let $D_J=\bigcap_{j\in J}D_j$. A result of Morgan \cite{m} says that the rational (and hence the complex) homotopy type of $X$ is determined by the cohomology rings of $D_J$ and the Gysin maps between them. 

To make the above statement more precise, we introduce the Gysin model $(\sA^\ubul, d)=(\sA^\ubul(\overline{X},D), d)$ of $X$ with respect to the compactification $\overline{X}$ (see \cite{m} and \cite{dp}). As graded vector spaces, $\sA^k=\bigoplus_{p+l=k}\sA^{p,l}$ and $\sA^{p, l}=\bigoplus_{|J|=l}H^p(D_J, \cc)$. The product is given by the cup-product
$$H^p(D_J, \cc)\otimes H^{p'}(D_{J'}, \cc)\to H^{p+p'}(D_{J\cup J'}, \cc)$$
which is nonzero only when $J\cap J'=\emptyset$. The differential, $d: \sA^{p,l}\to \sA^{p+2, l-1}$, is a linear map,  which component-wise are Gysin maps of the form 
$$H^p(D_J, \cc)\to H^{p+2}(D_{J'}, \cc)$$
where $J'\subset J$ and $|J'|=|J|-1$. 
\begin{theorem}[\cite{m}]\label{morgan}
Let the Gysin model $(\sA^\ubul, d)$ be defined as above. Then there is a zigzag of quasi-isomorphisms of differential algebras connecting the de Rham complex $\Omega^\ubul_{DR}(X)$ and the Gysin model $(\sA^\ubul, d)$. 
\end{theorem}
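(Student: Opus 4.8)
The plan is to identify the Gysin model $(\sA^\ubul, d)$ with the $E_1$-term of the weight filtration on Deligne's logarithmic de Rham complex of $X$, and then to promote the $E_2$-degeneration of the associated spectral sequence from a statement about cohomology vector spaces to a zigzag of quasi-isomorphisms of differential algebras.

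\emph{Step 1: a logarithmic model.} First I would replace $\Omega^\ubul_{DR}(X)$ by the differential algebra $\sA^\ubul_{\log}:=\Gamma\bigl(\overline{X}, \sE^\ubul_{\overline{X}}(\log D)\bigr)$ of global smooth $\cc$-valued forms on $\overline{X}$ with logarithmic poles along $D$. This is closed under wedge product (one cannot produce a double logarithmic pole), so it is a differential algebra, and the restriction map $\sA^\ubul_{\log}\hookrightarrow \Omega^\ubul_{DR}(X)$ is a morphism of differential algebras which is a quasi-isomorphism by Deligne's theory. The complex $\sA^\ubul_{\log}$ carries the increasing weight filtration $W_m$ by the number of logarithmic poles, and this filtration is multiplicative: $W_a\wedge W_b\subseteq W_{a+b}$.

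\emph{Step 2: the $E_1$-term is the Gysin model.} The iterated residue maps give, for each $m$, an isomorphism of complexes $\mathrm{Gr}^W_m \sA^\ubul_{\log}\cong \bigoplus_{|J|=m}\Omega^\ubul_{DR}(D_J)[-m]$, compatible with products, the product on the associated graded being forced to vanish unless the index sets are disjoint (exactly the condition $J\cap J'=\emptyset$ in the statement). Passing to cohomology, the $E_1$-page of the weight spectral sequence is $\bigoplus_{|J|=l}H^p(D_J)=\sA^{p,l}$, and a \v{C}ech--de Rham computation of the connecting homomorphisms of the residue sequences identifies the differential $d_1$ with the alternating sum of the Gysin maps $H^p(D_J)\to H^{p+2}(D_{J'})$ over $J'\subset J$ with $|J'|=|J|-1$. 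Hence $(E_1, d_1)$ of the filtered differential algebra $(\sA^\ubul_{\log}, W)$ is precisely the Gysin model $(\sA^\ubul, d)$, as differential algebras.

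\emph{Step 3: from $E_2$-degeneration to a zigzag of DGA quasi-isomorphisms.} The substantive input is that the weight spectral sequence degenerates at $E_2$ (Deligne): each $H^p(D_J)$ is pure of weight $p$, the spectral sequence is one of Hodge structures, and the higher differentials $d_r$ ($r\geq 2$) shift weights, hence vanish by strictness. The main obstacle is that $E_2$-degeneration of vector spaces does not by itself produce a quasi-isomorphism of \emph{algebras}. Following Morgan, I would construct a minimal model $\rho\colon \sM^\ubul\to \sA^\ubul_{\log}$ compatible with the weight filtration, so that on $\sM^\ubul$ the differential splits by weight as $d=d_1+d_{\geq 2}$; purity and the $E_2$-degeneration then allow the higher-weight part $d_{\geq 2}$ to be removed by a filtered automorphism, so that $(\sM^\ubul, d_1)$ is simultaneously a minimal model of $(\sA^\ubul_{\log}, d)$ and of $(E_1, d_1)=(\sA^\ubul, d)$. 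Composing the resulting maps yields the desired zigzag. Assembling this filtered minimal model and tracking the weight bigrading on it is the delicate bookkeeping; this is exactly the argument of \cite{m}, and I do not expect a genuine shortcut.
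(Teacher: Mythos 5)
The paper does not prove this theorem; it is cited directly from Morgan \cite{m} (with \cite{dp} as a secondary reference). So there is no internal proof to compare against, only the source argument you are reconstructing. Your reconstruction is an accurate sketch of Morgan's argument, and you are right to flag Step 3 as the nontrivial part.

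A few remarks to confirm the details. Step 1 and Step 2 are standard and correct: the restriction $\sA^\ubul_{\log}\to \Omega^\ubul_{DR}(X)$ is a DGA quasi-isomorphism, the residue map gives $\mathrm{Gr}^W_m\sA^\ubul_{\log}\cong \bigoplus_{|J|=m}\Omega^\ubul_{DR}(D_J)[-m]$ multiplicatively, and the $E_1$-differential is the alternating sum of Gysin maps, matching the convention $d:\sA^{p,l}\to \sA^{p+2,l-1}$ in the statement. For Step 3, the real content in Morgan is the construction of a \emph{bigraded} (not merely filtered) minimal model, using the mixed Hodge structure on the homotopy groups; only with the genuine bigrading does the differential literally split into weight-shifting pieces, and only then can one argue, via strictness and $E_2$-degeneration, that the bigraded model of $(\sA^\ubul_{\log}, W)$ coincides with that of its $E_1$-term $(\sA^\ubul, d)$. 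Your phrasing ``removed by a filtered automorphism'' compresses this: what Morgan actually shows is that the bigraded minimal models agree, which is equivalent to the zigzag statement since a zigzag of DGA quasi-isomorphisms exists precisely when the minimal models are isomorphic. This is a gloss, not an error, and you correctly attribute the remaining bookkeeping to \cite{m}. In short, the proposal is a faithful reconstruction of the cited proof, not a new route.
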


Let $\eta=\eta^{1,0}+\eta^{0,1}$ be the decomposition according to $\sA^1=\sA^{0,1}\oplus\sA^{1,0}$. Warning: this is not the Hodge decomposition! In fact, $\eta^{1,0}$ is pure of weight $1$ and $\eta^{0,1}$ is of Hodge type $(1,1)$.  Since the differential vanishes on $\sA^{1,0}$, $\eta^{1,0}$ is a 1-cycle, and hence $\eta^{0,1}$ is also a 1-cycle. Then $\sA^{\ubul,\ubul}$ with $\wedge\eta=\wedge\eta^{0,1}+\wedge\eta^{1,0}$ becomes a double complex. Moreover, the Gysin map $d: \sA^{p,l}\to \sA^{p+2, l-1}$ commutes with both $\wedge\eta^{0,1}$ and $\wedge\eta^{1,0}$ up to a sign. 

By definition, the differentials in the complex $\sA^\ubul(\eta, \infty)$ are given by $$d\otimes \id+\wedge\eta\cdot s=d\otimes \id+\wedge\eta^{0,1}\cdot s+\wedge\eta^{1,0}\cdot s.$$ Thus, we can think of $\sA^\ubul(\eta, \infty)$ as a double complex with three arrows, $\wedge\eta^{0,1}\cdot s$, $\wedge\eta^{1,0}\cdot s$ and $d\otimes \id$. 
Each of them can be considered as a homomorphism of graded free $\widehat{R}$-modules of degree one and zero respectively. Here $\deg (s)=1$ and any element in $\sA^{p,l}$ has degree zero.

Consider the dual double complex $$\sA_\ubul(\eta, \infty):=\homo_{\widehat{R}} \left(\sA^\ubul(\eta, \infty), \widehat{R}\right)$$ with inverted arrows.  Denote $\widehat{\sA}_{p,l} :=\sA_\ubul(\eta, \infty)_{p,l}= \homo_{\widehat{R}} (\sA^{p, l}\otimes_\cc  \widehat{R} , \widehat{R}). $  The corresponding three maps in $\sA_\ubul(\eta, \infty)$, still denoted by $d\otimes \id$, $\wedge\eta^{0,1}\cdot s$ and $\wedge\eta^{1,0}\cdot s$, send the elements in $\widehat{\sA}_{p,l}$ to $\widehat{\sA}_{p-2,l+1}$, $\widehat{\sA}_{p,l-1}$ and $\widehat{\sA}_{p-1,l}$, respectively.    

\begin{prop} \label{homology}  With the above assumptions and notations, 
one has the following $\widehat{R}$-isomorphism:  
$$ H_i\left(\sA_\ubul(\eta, \infty)\right)\cong H_i(X, \sL^\phi \otimes_R \widehat{R}).$$ 
\end{prop}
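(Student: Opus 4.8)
The plan is to deduce this from the chain of isomorphisms already established, by reducing to the statement for the de Rham model and then invoking Morgan's quasi-isomorphism together with standard duality between homology and cohomology of complexes of free modules over $\widehat{R}$. More precisely, by Theorem \ref{morgan} there is a zigzag of quasi-isomorphisms of differential algebras between $\Omega^\ubul_{DR}(X)$ and $(\sA^\ubul,d)$. Applying Proposition \ref{homotopyequiv} to each arrow in the zigzag (tracking the image of the $1$-cycle $\eta$, which by Proposition \ref{independence} is harmless since cohomologous $1$-forms give isomorphic cohomology), we get a zigzag of quasi-isomorphisms of complexes of $\widehat{R}$-modules between $\Omega^\ubul_{DR}(X,\eta,\infty)$ and $\sA^\ubul(\eta,\infty)$. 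By Corollary \ref{hat}, $H^i\left(\Omega^\ubul_{DR}(X,\eta,\infty)\right)\cong H^i(X,\sL^\phi\otimes_R\widehat{R})$. Hence $H^i\left(\sA^\ubul(\eta,\infty)\right)\cong H^i(X,\sL^\phi\otimes_R\widehat{R})$ as $\widehat{R}$-modules.

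It then remains to pass from the cohomology of $\sA^\ubul(\eta,\infty)$ to the homology of its $\widehat{R}$-dual $\sA_\ubul(\eta,\infty)=\homo_{\widehat{R}}\left(\sA^\ubul(\eta,\infty),\widehat{R}\right)$. First I would note that $\sA^\ubul(\eta,\infty)$ is a bounded complex of finitely generated free $\widehat{R}$-modules: in each degree it is $\bigoplus_{p+l=k}H^p(D_J,\cc)\otimes_\cc\widehat{R}$, which is finitely generated and free because each $H^p(D_J,\cc)$ is a finite-dimensional $\cc$-vector space and $\overline{X}$ has finitely many boundary strata. For such a complex, the Universal Coefficient spectral sequence (equivalently, the argument behind \eqref{uct} applied over the regular local ring $\widehat{R}$, which has global dimension one) gives for each $i$ a natural short exact sequence
\begin{equation}\label{eq:duality}
0\to\mathrm{Ext}^1_{\widehat{R}}\left(H^{i+1}\left(\sA^\ubul(\eta,\infty)\right),\widehat{R}\right)\to H_i\left(\sA_\ubul(\eta,\infty)\right)\to\homo_{\widehat{R}}\left(H^{i}\left(\sA^\ubul(\eta,\infty)\right),\widehat{R}\right)\to 0,
\end{equation}
so $H_i\left(\sA_\ubul(\eta,\infty)\right)$ is built from the free part of $H^i$ and the torsion part of $H^{i+1}$. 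On the other hand, exactly the same description holds for $H_i(X,\sL^\phi\otimes_R\widehat{R})$ via \eqref{uct} (after tensoring with $\widehat{R}$, which is flat over $R$ and faithfully flat over $R_{(s)}$, so it commutes with taking free and torsion parts). Since we have already identified $H^\ubul\left(\sA^\ubul(\eta,\infty)\right)$ with $H^\ubul(X,\overline{\sL^\phi}\otimes_R\widehat{R})$ up to the involution $t\mapsto t^{-1}$ — which preserves the $(s)$-adic completion and all ranks of free and torsion parts at the maximal ideal — the two short exact sequences match term by term, and a diagram chase yields the desired $\widehat{R}$-isomorphism $H_i\left(\sA_\ubul(\eta,\infty)\right)\cong H_i(X,\sL^\phi\otimes_R\widehat{R})$.

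The main obstacle, and the point deserving the most care, is the duality step \eqref{eq:duality}: one must be sure that dualizing the complex of free $\widehat{R}$-modules and then taking homology computes the same thing as first taking cohomology and then forming free and torsion parts, with all identifications compatible with the $\widehat{R}$-module structure. This is precisely the content of the Universal Coefficient Theorem over the PID $\widehat{R}$ and is where one uses that $\sA^\ubul(\eta,\infty)$ consists of \emph{free} finitely generated modules in each degree (so $\homo_{\widehat{R}}(-,\widehat{R})$ is exact on it and the dual complex again has free terms). A secondary subtlety is bookkeeping with the involution $t\mapsto t^{-1}$: throughout one should observe that it fixes the maximal ideal $(s)=(t-1)$ and hence acts trivially on $(s)$-adic completions of eigenvalue-$1$ parts, so the identification of cohomology in Corollary \ref{hat} transfers cleanly to the dual side. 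Once these two points are handled, the proposition is a formal consequence of Theorem \ref{morgan}, Proposition \ref{homotopyequiv}, Proposition \ref{independence} and Corollary \ref{hat}.
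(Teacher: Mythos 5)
Your proposal is correct and takes essentially the same approach as the paper's: identify $H^i\left(\sA^\ubul(\eta,\infty)\right)$ with $H^i(X,\sL^\phi\otimes_R\widehat{R})$ by combining Theorem~\ref{morgan} with Proposition~\ref{homotopyequiv} and Corollary~\ref{hat}, then pass to homology of the $\widehat{R}$-dual complex via the Universal Coefficient Theorem over the PID $\widehat{R}$, keeping track of the involution $t\mapsto t^{-1}$, which fixes the maximal ideal $(t-1)$. You merely make explicit two steps the paper leaves implicit (the use of Proposition~\ref{homotopyequiv} along the zigzag and the precise form of the UCT short exact sequence).
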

 
\begin{proof}

$\widehat{R}$ is a PID (principal ideal domain). 
 Using the Universal Coefficient Theorem for PID  (see (\ref{uct})), one can read cohomology from homology, and vice versa. 
 
 Combing Corollary \ref{hat} and Theorem \ref{morgan}, we get that  \begin{center}
$ H^i\left(\sA^\ubul(\eta,\infty)\right) \cong H^i(X, \sL^\phi \otimes_R \widehat{R})$ for all $i$.
 \end{center} Note that $\sA^\ubul(\eta,\infty)$ is a complex of the finitely generated free $\widehat{R}$-modules.  Applying the functor $\homo_{\widehat{R}}(-, \widehat{R})$ to $\sA^\ubul(\eta,\infty)$,  the claim follows from the Universal Coefficient Theorem (see \ref{uct}) and the fact that the involution does not change the eigenvalue $1$ part.
 \end{proof}

\begin{prop}   With the above assumptions and notations, the eigenvalue $1$ torsion part $T^1_i(X,\phi)\otimes_R \widehat{R}$  is annihilated by $s^{\min\{i+1, 2n-i\}}$.
\end{prop}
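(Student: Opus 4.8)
The plan is to transport everything into Morgan's Gysin model and then run a spectral-sequence argument that is a ``relative'' version of the formality computation of Section 4 (Propositions \ref{algebraformal}--\ref{formal}), organised by the mixed Hodge weights carried by that model. First I would reduce: by Corollary \ref{deRham}, Proposition \ref{homology} and two uses of the Universal Coefficient Theorem over the PID $\widehat{R}$ (once for $\homo_{\widehat{R}}(-,\widehat{R})$, and using that the involution $t\mapsto t^{-1}$ fixes the eigenvalue-$1$ part), one gets
$$T^1_i(X,\phi)\otimes_R\widehat{R}\ \cong\ \mathrm{Tor}\big(H_i(\sA_\ubul(\eta,\infty))\big)\ \cong\ \mathrm{Tor}\big(H^{i+1}(\sA^\ubul(\eta,\infty))\big),$$
so it is enough to bound the exponent of the $(s)$-torsion of $H^{i+1}(\sA^\ubul(\eta,\infty))$. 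Because $\widehat{R}$ is complete, the weight filtration is not exhaustive on $\sA^\ubul(\eta,\infty)$ itself; I would therefore run the estimate on the finite truncations $\sA^\ubul(\eta,N)=\sA^\ubul(\eta,\infty)\otimes_{\widehat{R}}\widehat{R}/(s^N)$ (the complex being free, no derived tensor is needed), get a bound on $\sS_1\big(H^{i+1}(\sA^\ubul(\eta,N))\big)$ that is independent of $N$, and pass to the limit via the universal coefficient sequence relating $H^\ubul(\sA^\ubul(\eta,N))$ to $H^\ubul(\sA^\ubul(\eta,\infty))$.

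Next the weight filtration. Recall $\eta=\eta^{1,0}+\eta^{0,1}$ with $\eta^{1,0}\in\sA^{1,0}$ of weight $1$, $\eta^{0,1}\in\sA^{0,1}$ of type $(1,1)$, and that the Gysin differential $d\colon\sA^{p,l}\to\sA^{p+2,l-1}$ preserves the weight $p+2l$. Assign $s$ the ``weight'' $-1$ and filter $\sA^\ubul(\eta,N)$ by
$$F^m=\operatorname{span}\big\{\,s^j\otimes x\ :\ x\in\sA^{p,l},\ (p+2l)-j\ge m\,\big\}.$$
The three parts $d$, $s\wedge\eta^{1,0}$, $s\wedge\eta^{0,1}$ of the differential shift $(p+2l)-j$ by $0$, $0$, $+1$, so $F^\ubul$ is a finite decreasing filtration by subcomplexes, and each graded piece $\mathrm{gr}^m_F$ is a \emph{complex of finite-dimensional $\cc$-vector spaces} (every power of $s$ is absorbed into the grading), with differential induced by $d+s\wedge\eta^{1,0}$. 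Hence every weight-graded piece of the $E_1$-page is a $\cc$-vector space, i.e. is annihilated by $s$; the $\widehat{R}$-module structure only glues consecutive weights, with multiplication by $s$ dropping the weight by $1$.

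It follows that the $(s)$-torsion of $H^{i+1}(\sA^\ubul(\eta,N))$ is confined, uniformly in $N$, to a band of weights, and a torsion summand $\widehat{R}/(s^b)$ can occur only with $b$ at most the width of that band. The top of the band is $\le\min\{2i+1,2n\}$: the bound $2n$ is forced by $H^p(D_J)=0$ for $p>2(n-|J|)$, and the improvement to $2i+1$ (one below the naive $2i+2$) can be read off from Poincar\'e duality on the smooth projective strata $D_J$, which identifies $\sA_\ubul(\eta,\infty)$ in homological degree $k$, term by term, with a re-indexed Gysin-type complex in cohomological degree $2n-k$ — the Gysin-model shadow of $H^k_c(X^\phi)\cong H_{2n-k}(X^\phi)$. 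The bottom of the band is $\ge i+1$, because below weight $i+1$ the complexes $\mathrm{gr}^m_F$ have ``stabilised'' to ones with free cohomology, which reflects the formality of the compact K\"ahler manifolds $\overline X$ and $D_J$. The width is then at most $\min\{2i+1,2n\}-(i+1)+1=\min\{i+1,2n-i\}$, which is exactly the asserted annihilation by $s^{\min\{i+1,2n-i\}}$. (When $\phi$ is pure of type $(1,1)$, so $\eta^{1,0}=0$, the band is instead $[\,i+2,\ \min\{2i+2,2n\}\,]$, of width $\min\{i+1,2n-i-1\}$, which is Proposition \ref{pure}.)

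The main obstacle is the third step: proving rigorously, and uniformly in $N$, that the $(s)$-torsion of $H^{i+1}(\sA^\ubul(\eta,N))$ sits in the weight band $[\,i+1,\ \min\{2i+1,2n\}\,]$. This needs both that the weight spectral sequence degenerates early enough and that the two endpoints are exactly as stated; both rest on the strictness of morphisms of the mixed Hodge structures carried by Morgan's model, applied on each truncation $\sA^\ubul(\eta,N)$ and then reconciled with the $s$-adic completion as $N\to\infty$. In particular, isolating why the weight-$1$ summand $\eta^{1,0}$ costs exactly one unit relative to the pure case of Proposition \ref{pure} is the technical heart of the argument.
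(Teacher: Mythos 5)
Your first reduction (dualize to $\sA_\ubul(\eta,\infty)$, invoke the Universal Coefficient Theorem over the PID $\widehat{R}$, note the involution fixes the eigenvalue-$1$ part) matches the paper, and passing to Morgan's Gysin model via Corollary~\ref{hat} is the right starting point. But the core of your argument — the filtration by ``weight'' $(p+2l)-j$ with $s$ assigned weight $-1$, the claim that the $(s)$-torsion of $H^{i+1}$ is confined to a weight band $[\,i+1,\ \min\{2i+1,2n\}\,]$, and the deduction of the annihilation bound from the width of that band — is not carried out. You flag this yourself as ``the main obstacle,'' and the two endpoints of the band are only asserted: the upper bound $\le\min\{2i+1,2n\}$ is waved at via Poincar\'e duality on the $D_J$, and the lower bound $\ge i+1$ is justified by an appeal to ``stabilisation to free cohomology, reflecting formality'' that is not a proof. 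Without those, there is no bound. Also, because $j$ is unbounded in $\widehat{R}=\cc[[s]]$, your filtration is not finite on $\sA^\ubul(\eta,\infty)$, forcing a detour through truncations $\sA^\ubul(\eta,N)$ and an inverse-limit argument; this adds technical weight without resolving the missing estimate.

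The paper's proof avoids all of this by filtering $\sA_\ubul(\eta,\infty)$ by the Gysin column index $p$ alone (not by a mixed $(p+2l)-j$ weight). Then $d^0=\wedge\eta^{0,1}\cdot s$ and $d^1=\wedge\eta^{1,0}\cdot s$ are graded of degree $1$ in the $\widehat{R}$-grading with $\deg s=1$ and generators in degree $0$, while $d^2$ (induced by the Gysin map $d\otimes\id$) is graded of degree $0$. This forces $E^1$, $E^2$, $E^3$ to be semi-simple $\widehat{R}$-modules, the spectral sequence degenerates at $E^3$, and the resulting filtration on $H_i$ has at most $\min\{i+1,2n-i+1\}$ steps (from $0\le p$, $0\le q$, $p+2q\le 2n$), each with semi-simple graded quotient — hence the torsion is killed by $s^{\min\{i+1,2n-i+1\}}$, improved by one because the extremal piece with $p+2q=2n$ is a submodule of a free module. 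Your weight-band intuition is pointing at the same phenomenon, but the precise mechanism — semi-simplicity of each page combined with step counting in the $p$-filtration — is what turns it into a proof, and that mechanism is absent from your argument as written.
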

\begin{proof}
 Now consider the filtrations by columns for the total complex $\sA_\ubul(\eta, \infty)$.  Then the zeroth page of the associated spectral sequence is 
$$E^0_{p, q} \cong \widehat{\sA}_{p, q}$$
with $d^0=\wedge\eta^{0,1}\cdot s$. There are two hidden arrows $\wedge\eta^{1,0}\cdot s$ and $d\otimes \id$, which will be reflected in later pages and which are both homomorphisms of graded $\widehat{R}$-modules.

On the first page of the spectral sequence, 
$$E_{p, q}^1=H_q(\widehat{\sA}_{p, \ubul}  ,  \wedge\eta^{0,1}\cdot s)$$
with $d^1=\wedge\eta^{1, 0}\cdot s$. It is straightforward to check that $E_{p,q}^1$ are graded $\widehat{R}$-modules and are generated by elements of degree zero. In fact, since $d^0$ is graded of degree one, $E_{p,q}^1$ are all semi-simple as $\widehat{R}$-modules. Since $d\otimes \id$ commutes with $\wedge\eta^{0,1}$ up to a sign, the hidden arrows $d\otimes \id$ induces arrows on the $E^1$ page, which are homomorphisms of graded $\widehat{R}$-modules of degree zero. 

Consider the $E^1$ page differential $$d^1=\wedge\eta^{1, 0}\cdot s: E_{p, q}^1 \to E_{p-1, q}^1.$$ 
Notice that for all $q$, $E_{p, q}^1=H_q(\widehat{\sA}_{p, \ubul}  ,  \wedge\eta^{0,1}\cdot s)$ are semi-simple $\widehat{R}$-modules generated by elements in degree zero. Since $d^1$ is graded of degree one, the torsion part of $E_{p, q}^1$ is contained in the kernel of $d^1$. Moreover, the induced map from the free part of $E_{p, q}^1$ to the torsion part of $E_{p-1, q}^1$ is also trivial. Therefore, the kernel of $d^1$ is generated by elements of degree zero. Thus $E_{p,q}^2$ are generated by elements of degree zero. Similarly, we can easily see that the cokernels of $d^1$ are also semi-simple. Since submodules of semi-simple $\widehat{R}$-modules are semi-simple, $E_{p,q}^2$ are semi-simple $\widehat{R}$-modules. 

Now we have proved that $E_{p,q}^2$ are all semi-simple $\widehat{R}$-modules generated by elements in degree zero. The differential $d^2$ is induced by $d\otimes \id$. Hence $d^2$ is  graded  of degree zero. It is easy to check that given any complex of semi-simple graded $\widehat{R}$-modules generated in degree zero, where the differentials are graded of degree zero, the cohomology modules are also semi-simple. Therefore, $E_{p,q}^3$ are semi-simple $\widehat{R}$-modules. 

Clearly the spectral sequence degenerates at $E^3$. The spectral sequence induces a filtration on $H_{i}\left(\sA_\ubul(\eta, \infty)\right)$, such that the graded quotients are semi-simple $\widehat{R}$-modules. Notice that $E_{p,q}^0=0$ if $p<0$ or $q<0$ or $p+2q>2n$. Therefore, the number of steps of the filtration on $H_{i}\left(\sA_\ubul(\eta, \infty)\right)$ is at most $\min\{i+1, 2n-i+1\}$. Note that, when $p+2q=2n$, $E^3_{p, q}$ is an $\widehat{R}$-submodule of $E^0_{p, q}$, and hence free.  So the bound can be improved from $\min\{i+1, 2n-i+1\}$ to $\min\{i+1, 2n-i\}$.

Now, the claim follows from Proposition \ref{homology}.
\end{proof}

\begin{proof}[Proof of Theorem \ref{main2}]
Note that the finite (unramified) cover of a smooth complex quasi-projective  variety is still a smooth variety. So Proposition \ref{torsion} and (\ref{finte}) allow us to reduce the proof to the case when $\lambda=1$, which is confirmed by the preceding proposition. 
\end{proof}

\begin{proof}[Proof of Proposition \ref{pure}]
When $\lambda=1$, the bound $\min\{i+1, 2n-i\}$ can be slightly improved if $\phi$  is pure with weight 2. In this case,  $\eta^{1,0}=0$. Hence, when $p+2q=2n-1$, $E^3_{p, q}$ is an $\widehat{R}$-submodule of $E^0_{p, q}$ and hence free.  Thus the bound $\min\{i+1, 2n-i\}$ can be improved into $\min\{i+1, 2n-i-1\}$.

Note that we can assume that the $N$-fold cover $X^N$ is a smooth quasi-projective variety and the covering map $c: X^N \to X$ is algebraic.  Since $X$ is smooth, $Rc_* \cc_{X^N}$ is a rank $N$ $\cc$-local system on $X$. The corresponding representation is as follows: 
$$ \pi_1(X) \overset{\phi}{\to} \zz \to GL_N(\cc),$$
where the last map sends $1_\zz$ to the diagonal matrix with the main diagonal elements $\{1, e^{2 \pi i/N}, \cdots, e^{2 \pi i (N-1)/N}  \}.$ 
In particular, this local system splits with $N$-direct summands, and one of them is the constant sheaf $\cc_X$. This shows that the covering map induces an injective mixed Hodge structure map $$H^1(X) \hookrightarrow H^1(X^N).$$ Moreover, one has the following commutative maps associated to the diagram (\ref{diagram}): $$
\xymatrix{
H_1(X^N)   & \homo (\zz, \cc)= \cc\ar[l]_{\phi_N^*}  \\
H_1(X)    \ar[u]    & \homo (\zz, \cc)=\cc  \ar[l]_{\phi^*}  \ar[u]_{N^*} 
} 
$$ where $N^*$ is the $N$-scaling linear isomorphism for $\cc$.   So, if $\phi$ is pure type with weight two, then so is  $\phi_N$ for $X^N$. Therefore, the claim for the case $\lambda\neq 1$ can be reduced to the case $\lambda=1$ by Proposition \ref{torsion} and (\ref{finte}).
\end{proof}

\end{document}